  \tikzset{vertex/.style={circle,draw,fill,scale=.35}}
\tikzset{op/.style={circle,fill,scale=.35}}
\tikzset{cl/.style={circle,fill,scale=.35}}
\tikzset{loopdown/.style={loop below,min distance=8mm,in=310,out=230,looseness=25}}
\tikzset{loopup/.style={loop above,min distance=8mm,in=130,out=50,looseness=25}}
\newcommand{\R}{\mathbb{R}}
\newcommand{\Q}{\mathbb{Q}}
\newcommand{\Z}{\mathbb{Z}}
\newcommand{\C}{\mathbb{C}}
\newcommand{\T}{\mathbb{T}}
\newcommand{\G}{\mathcal{G}}
\newcommand{\lie}{\mathfrak{g}}
\newcommand{\lien}{\mathfrak{n}}
\newcommand{\liem}{\mathfrak{m}}
\newcommand{\n}{\mbox{$\mathfrak{n}$}}
\newcommand{\oprec}{\overline{\prec}}
\DeclareMathOperator{\Aff}{Aff}
\DeclareMathOperator{\GL}{GL}
\DeclareMathOperator{\Aut}{Aut}
\DeclareMathOperator{\End}{End}
\DeclareMathOperator{\Perm}{Perm}
\newcommand{\rvline}{\hspace*{-\arraycolsep}\vline\hspace*{-\arraycolsep}}
\newtheorem{theorem}{Theorem}[section]
\newtheorem{lemma}[theorem]{Lemma}
\newtheorem{proposition}[theorem]{Proposition}
\newtheorem{corollary}[theorem]{Corollary}
\newtheorem{QN}{Question}
\newtheorem*{Con}{Conjecture}
\newtheorem*{Prop*}{Proposition}
\newtheorem*{Lem*}{Lemma}
\theoremstyle{definition}
\newtheorem{definition}[theorem]{Definition}
\newtheorem{example}[theorem]{Example}
\newtheorem*{rep@theorem}{\rep@title}
\newcommand{\newreptheorem}[2]{%
	\newenvironment{rep#1}[1]{%
		\def\rep@title{#2 \ref{##1}}%
		\begin{rep@theorem}}%
		{\end{rep@theorem}}}
\theoremstyle{remark}
\newtheorem{remark}[theorem]{Remark}
\numberwithin{equation}{section}
\newcommand{\compcent}[1]{\vcenter{\hbox{$#1\circ$}}}
\newcommand{\comp}{\mathbin{\mathchoice
		{\compcent\scriptstyle}{\compcent\scriptstyle}
		{\compcent\scriptscriptstyle}{\compcent\scriptscriptstyle}}}
	\newcommand\restr[2]{{% we make the whole thing an ordinary symbol
			\left.\kern-\nulldelimiterspace % automatically resize the bar with \right
			#1 % the function
			\vphantom{\big|} % pretend it's a little taller at normal size
			\right|_{#2} % this is the delimiter
	}}
\begin{document}

% \title[short text for running head]{full title}
\title[Anosov diffeomorphisms on infra-nilmanifolds associated to graphs]{\bf Anosov diffeomorphisms on infra-nilmanifolds \\ associated to graphs}

%    Only \author and \address are required; other information is
%    optional.  Remove any unused author tags.

%    author one information
% \author[short version for running head]{name for top of paper}
\author{Jonas Der\'e}

\date{}
\address{KU Leuven Kulak, E. Sabbelaan 53, 8500 Kortrijk, Belgium}
%\curraddr{}
\email{jonas.dere@kuleuven.be}
\thanks{The first author was supported by a postdoctoral fellowship of the Research Foundation -- Flanders (FWO)}

%    author two information
\author{Meera Mainkar}
%\thanks{?}
\address{Department of Mathematics, Pearce Hall, Central Michigan University, Mt. Pleasant, MI 48858, USA} 
\email{maink1m@cmich.edu}

%    \subjclass is required.
\subjclass[2010]{Primary: 37D20; Secondary: 22E25, 20F34}

\begin{abstract}
Anosov diffeomorphisms on closed Riemannian manifolds are a type of dynamical systems exhibiting uniform hyperbolic behavior. Therefore their properties are intensively studied, including which spaces allow such a diffeomorphism. It is conjectured that any closed manifold admitting an Anosov diffeomorphism is homeomorphic to an infra-nilmanifold, i.e.~a compact quotient of a $1$-connected nilpotent Lie group by a discrete group of isometries. This conjecture motivates the problem of describing which infra-nilmanifolds admit an Anosov diffeomorphism.

So far, most research was focused on the restricted class of nilmanifolds, which are quotients of $1$-connected nilpotent Lie groups by uniform lattices. For example, Dani and Mainkar studied this question for the nilmanifolds associated to graphs, which form the natural generalization of nilmanifolds modeled on free nilpotent Lie groups. This paper further generalizes their work to the full class of infra-nilmanifolds associated to graphs, leading to a necessary and sufficient condition depending only on the induced action of the holonomy group on the defining graph. As an application, we construct families of infra-nilmanifolds with cyclic holonomy groups admitting an Anosov diffeomorphism, starting from faithful actions of the holonomy group on simple graphs.
\end{abstract}

\maketitle

\section{Introduction}

A diffeomorphism $f: M \to M$ on a closed Riemannian manifold $M$ is called \emph{Anosov} if the tangent bundle $TM$ has a continuous splitting $TM = E^u \oplus E^s$ preserved by $Df: TM \to TM$ such that $Df$ exponentially expands $E^u$ and exponentially contracts $E^s$ with respect to the Riemannian metric. This property does not depend on the choice of metric on $M$ and hence we will just talk about Anosov diffeomorphisms on a closed manifold. The first example of an Anosov diffeomorphism was \emph{Arnolds' cat map}, which is the map induced by the matrix $\begin{pmatrix} 2 & 1\\ 1 & 1 \end{pmatrix}$ on the $2$-torus $\T^2 = \faktor{\R^2}{\Z^2}$. Similarly it is easy to find for every $n \geq 2$ matrices in $\GL(n,\Z)$ which induce Anosov diffeomorphisms on the torus $\T^n = \faktor{\R^n}{\Z^n}$, whereas the circle $S^1$ does not admit such a map. Note that the $n$-dimensional torus is exactly the nilmanifold modeled on the abelian Lie group $\R^n$.

In his seminal paper \cite{smal67-1}, S. Smale gave the first example of a non-toral Anosov diffeomorphism and raised the question of classifying the closed manifolds $M$ admitting such an Anosov diffeomorphism. After more than $50$ years, the only known examples are on spaces homeomorphic to infra-nilmanifolds, which we will introduce in full detail in Section \ref{sec:IN}. In short, an infra-nilmanifold is a compact quotient of a simply connected nilpotent Lie group $N$ by a discrete subgroup of isometries. Nowadays, it is conjectured that every manifold admiting an Anosov diffeomorphism is in fact homeomorphic to an infra-nilmanifold with an Anosov diffeomorphism, but unfortunately there is no recent progress towards a proof. Hence most research focuses on the problem of describing the infra-nilmanifolds admitting an Anosov diffeomorphism. 

Even for the restricted class of nilmanifolds, this is a hard question, leading to a variety of techniques for constructing and classifying nilmanifolds admitting such diffeomorphisms. For example, the papers \cite{laur03-1,payn09-1,dere13-1} give different methodes for constructing lattices in nilpotent Lie groups, leading to Anosov diffeomorphisms satisfying additional properties. Some of these constructions are general enough to give a complete list of possibilities, e.g.~\cite{lw09-1} gives a classification of Anosov diffeomorphisms on nilmanifolds of dimension $\leq 8$, which was slightly corrected by the second author in \cite{dere13-1}. 

One particular paper in this direction is \cite{dm05-1} which gives a full classification in the special case of nilmanifolds associated to graphs, generalizing the result of \cite{dani78-1} which treated the free nilpotent Lie groups. In the general case of infra-nilmanifolds, a lot less is known, and the only full characterization of Anosov diffeomorphisms is in the case of infra-nilmanifolds modeled on free nilpotent Lie groups in \cite{dv11-1,dd13-1}, which extended the classical result of Porteous for flat manifolds \cite{port72-1}. In this paper, we generalize the latter result to the class of infra-nilmanifolds modeled on Lie groups associated to graphs.

To state the main result, we first introduce some notations. Every infra-nilmanifold induces a unique rational Lie algebra $\lien^\Q$ and a representation of a finite subgroup $\rho: H \to \Aut(\lien^\Q)$ of automorphisms, which is called the rational holonomy representation. In the case of infra-nilmanifolds associated to a graph $\G$, this rational holonomy representation induces an action on the coherent components of the graph $\G$ and representations $\rho_i: H_i \to \GL(V_{\lambda_i})$ for some coherent components $\lambda_i$. The subgroups $H_i \le H$ are the stabilizers of the coherent components $\lambda_i$ for the action of $H$. We show that these representations completely determine the existence of an Anosov diffeomorphism.
\begin{theorem}\label{maintheoremintro}
Let $\Gamma \backslash  N_\G$ be an infra-nilmanifold associated to a graph $\G$ with rational holonomy representation $\rho: H \to \Aut(\lien^\Q_\G)$. If $\rho_i: H_i \to \GL(V_{\lambda_i})$ are the induced representations on the coherent components, then the following are equivalent.

\begin{center}
	The infra-nilmanifold $\Gamma \backslash N_\G$ admits an Anosov diffeomorphism. \\
	$\Updownarrow$ \\
	For every $1 \leq i \leq k$, every $\Q$-irreducible component of $\rho_i$ that occurs with multiplicity $m$\\ splits in more than $\frac{c(H \cdot \lambda_i)}{m}$ components over $\R$.
\end{center}
\end{theorem}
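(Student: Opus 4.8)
The plan is to translate the dynamical statement into a purely algebraic one and then solve the resulting problem orbit by orbit, via Schur theory and a Dirichlet-unit argument. First I would invoke the standard reduction for infra-nilmanifolds: $\Gamma\backslash N_\G$ admits an Anosov diffeomorphism if and only if the rational Lie algebra $\lien^\Q_\G$ carries a \emph{hyperbolic} automorphism $\phi$ --- one that is semisimple, has all complex eigenvalues off the unit circle, and whose eigenvalues are algebraic units --- compatible with the rational holonomy representation, i.e.\ commuting with $\rho(H)$. (As in the flat and free-nilpotent cases, one reduces to this semisimple, centralizing normal form, since the hyperbolic part of an Anosov automorphism is again an automorphism with the required integrality.) Since $\lien_\G$ is $2$-step and generated by its degree-one layer $\mathfrak{w}$, such a $\phi$ is determined by its restriction $A$ to $\mathfrak{w}^\Q$, and $\phi$ is hyperbolic precisely when both $A$ and the map induced on the commutator layer $U=[\lien_\G,\lien_\G]$ are hyperbolic. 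The point of the coherent components, established earlier, is that the reductive part of $\Aut(\lien_\G)$ acting on $\mathfrak{w}$ is governed by the blocks $V_\lambda$, and that edges join coherent components in a complete-bipartite, all-or-nothing pattern; hence the eigenvalues on $U$ are exactly the products $\mu\mu'$ of first-layer eigenvalues from adjacent components, and the whole problem factors over the components and their $H$-orbits.

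Next I would decompose the centralizer. The $H$-equivariant semisimple automorphisms of $\mathfrak{w}^\Q$ are the units of $\End_{\Q H}(\mathfrak{w}^\Q)$, and spreading such a map equivariantly over the orbit $H\cdot\lambda_i$ of $c_i:=c(H\cdot\lambda_i)$ coherent components reduces the data on that orbit to a semisimple unit of $\End_{\Q H_i}(V_{\lambda_i})$, the commutant of $\rho_i$. By Wedderburn this commutant is a product of matrix algebras $M_m(D)$, one factor for each $\Q$-irreducible constituent of $\rho_i$ of multiplicity $m$, with $D$ the associated division algebra; the number of real constituents into which this $\Q$-irreducible splits is exactly the number of real archimedean places contributed by $D$. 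Choosing $A$ thus amounts to choosing, factor by factor, a semisimple element of $M_m(D)^\times$, and the eigenvalue multiset of such an element --- together with its replication across the $c_i$ components of the orbit and the induced products on $U$ --- is what must be kept off the unit circle while consisting of units.

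The crux, and the main obstacle, is the number-theoretic realization on a single Wedderburn factor: determining exactly when one can pick a semisimple unit of $M_m(D)^\times$, spread over the orbit, whose $\mathfrak{w}$-eigenvalues and whose $U$-products are all algebraic units avoiding the unit circle. Here the threshold $c_i/m$ appears as follows. Equivariance over the orbit ties the eigenvalues on the $c_i$ components together, and the requirement that the bracket eigenvalues $\mu\mu'$ also avoid the unit circle imposes, for each Galois orbit of eigenvalues, essentially $c_i$ modulus constraints, relaxed by the freedom coming from the multiplicity $m$. By Dirichlet's unit theorem the supply of independent units off the unit circle in the relevant field is measured by its number of real places; a signature count should show that all constraints can be met simultaneously exactly when the number of real constituents strictly exceeds $c_i/m$, and that when it is at most $c_i/m$ the torus of admissible semisimple elements is forced to keep some eigenvalue on the unit circle, making every candidate non-hyperbolic. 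Making both directions precise --- exhibiting explicit units of the correct signature that realize a hyperbolic automorphism, and proving the sharp impossibility on the boundary --- is where the real work lies.

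Finally I would assemble the per-orbit choices into a single $H$-equivariant $A$, check via the complete-bipartite edge structure that all $U$-eigenvalues are the intended products $\mu\mu'$ and hence hyperbolic, and conclude that a global hyperbolic $H$-equivariant automorphism exists iff the stated condition holds on every orbit. As a consistency check, when $H$ is trivial each coherent component is its own orbit, so $c_i=1$, $\rho_i$ is trivial with a single $\Q$-irreducible of multiplicity $m=\dim V_{\lambda_i}$ and one real constituent, and the criterion reads $1>1/\dim V_{\lambda_i}$, i.e.\ every coherent component has at least two vertices; this recovers the Dani--Mainkar classification and confirms that the quantities $c(H\cdot\lambda_i)$, $m$, and the real splitting number have been identified correctly.
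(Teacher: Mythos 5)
Your overall architecture matches the paper's: reduce via the Dekimpe--Verheyen criterion to finding an integer-like hyperbolic automorphism of $\lien^\Q_\G$ commuting with $\rho(H)$, pass to the Levi subgroup so that everything is governed by the action on coherent components, factor the problem over the $H$-orbits, reduce each orbit to a single $c(H\cdot\lambda_i)$-hyperbolic integer-like element of $\GL(V_{\lambda_i})$ commuting with $\rho_i(H_i)$, and finish with the representation-theoretic splitting criterion. However, there are two genuine gaps. First, the per-component criterion (a $c$-hyperbolic integer-like matrix commuting with $m$ copies of a $\Q$-irreducible representation exists iff the representation splits into strictly more than $c/m$ real components) is exactly the hard number-theoretic content; the paper imports it wholesale from Dekimpe--Der\'e, whereas you sketch a Wedderburn-plus-Dirichlet signature count and explicitly defer ``the real work.'' As written, neither the existence direction (producing units of prescribed signature whose pairwise products also avoid the unit circle) nor the sharp impossibility at the threshold is established, so the crux of the theorem is located but not proved. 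Either cite the result or supply that argument.

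Second, your assembly step is too quick. Having chosen $\varphi_i$ on each orbit, the eigenvalues of the resulting automorphism on the commutator layer include products $\mu\mu'$ where $\mu,\mu'$ come from \emph{different} orbits joined by an edge of $\overline{\G}$; nothing in the per-orbit hypotheses prevents $\vert\mu\mu'\vert=1$ (e.g.\ eigenvalues $2$ and $1/2$ on adjacent orbits). The paper fixes this by replacing the $\varphi_i$ by suitable powers before gluing; your ``check \dots\ hence hyperbolic'' would fail without this step. A smaller but real confusion: you repeatedly treat $c(H\cdot\lambda_i)$ as the number of coherent components in the orbit (``the orbit of $c_i$ coherent components,'' ``replication across the $c_i$ components''). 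It is not: an orbit can contain arbitrarily many components (the cyclic examples in the paper have orbits of size $n$), while $c(H\cdot\lambda_i)\in\{1,2\}$ records only whether the induced subgraph of $\overline{\G}$ on the orbit contains an edge, equivalently the nilpotency class of the subalgebra the orbit generates. Your final criterion is stated correctly, but the counting heuristic that is supposed to justify the threshold $c_i/m$ rests on this misidentification and would give the wrong bound for orbits of size greater than two.
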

\noindent The numbers $c(H \cdot \lambda_i)$  are either $1$ or $2$ and depend only on the orbit $H \cdot \lambda_i$ of the coherent component $\lambda_i$ under the action of $H$. A more detailed explanation of the notation in this theorem will follow in Section \ref{sec:mainresult}. 

In the special case where $H$ acts faithfully on the coherent components, each group $H_i$ is trivial and the criterion only depends on the number of elements in the component. We use this observation to give families of examples starting from faithful actions on finite graphs, leading to several families of infra-nilmanifolds modeled on graphs admitting an Anosov diffeomorphism.

We start by giving some general results about Anosov diffeomorphisms on infra-nilmanifolds in Section \ref{sec:AnosovonIN}, where the crucial ingredient is the characterization of \cite{dv11-1} in terms of rational Lie algebras and the rational holonomy representation. Section \ref{sec:fullauto} gives then a full description of the automorphism group of Lie algebras associated to graphs, including an induced action of finite subgroups on the coherent components. Next, we apply these results to get the main result in Section \ref{sec:mainresult} and present the construction for examples in Section \ref{sec:examples}. Finally, we state some open questions in Section \ref{sec:openQ}, related to other rational forms of Lie algebras associated to graphs.

\section{Anosov diffeomorphisms on infra-nilmanifolds}
\label{sec:AnosovonIN}

In this section, we introduce the necessary definitions about infra-nilmanifolds and recall the characterization in \cite{dv11-1} of infra-nilmanifolds admitting an Anosov diffeomorphism depending on the rational holonomy representation. Just as in the case of free nilpotent Lie groups, this will be the starting point for analyzing the existence of Anosov diffeomorphisms on Lie groups associated to graphs.

\subsection{Infra-nilmanifolds}
\label{sec:IN}

Let $N$ be a connected and simply connected (hereinafter $1$-connected) nilpotent Lie group and let $\Aut(N)$  denoted the group of Lie group automorphisms of $N$. The {\em affine group} $\Aff(N)$ is defined as the semi-direct product $N \rtimes \Aut(N)$ and acts on the Lie group $N$ on the left as follows:
\begin{eqnarray*}
	 (n, \phi) \cdot x = n \phi(x) \,\,\mbox{ for all } (n,\phi) \in \Aff(N), \hspace{1mm} x \in N.
\end{eqnarray*}

Let $C$ be a compact subgroup of $\Aut(N)$ and let $\Gamma$ be a discrete, torsion-free subgroup of $N \rtimes C$ such that the quotient  $\Gamma \backslash N$ is compact. We note that such a $\Gamma$ is known as an {\em almost-Bieberbach group} and since $C$ is compact, the group $\Gamma$ can be seen as a subgroup of isometries on $N$ for a suitable Riemannian metric. The quotient space $\Gamma \backslash N$ is a closed manifold and is called an {\em infra-nilmanifold modeled on the Lie group $N$}. If $C$ is trivial, then $\Gamma \le N$ is a uniform lattice and in this case the manifold $\Gamma \backslash N$ is called a nilmanifold. 

It is known that the normal subgroup $M = \Gamma \cap N$ of $\Gamma$ is a uniform lattice in $N$ and that the quotient $\faktor{\Gamma}{M} $ is finite by \cite{ausl60-1}, where we identify $N$ with the subgroup of left translations $N \times \{1\}$ of  $N \rtimes C$. This implies that the infra-nilmanifold  $\Gamma \backslash N$ is finitely covered by the nilmanifold $M\backslash N$ with $H = \faktor{\Gamma}{M}$ as the group of covering transformations. Let $p: \Gamma \to C$ denote the natural projection on the second component, then $H = p(\Gamma) \approx \faktor{\Gamma}{M}$ is a finite group which is known as the {\em holonomy group} of $\Gamma$. An infra-nilmanifold is a nilmanifold if and only if the holonomy group is trivial. The group $\Gamma$ fits in the following short exact sequence:
  \begin{align}\label{seq}
\xymatrix{1 \ar[r] & M \ar[r]  & \Gamma \ar[r]^p & H \ar[r] & 1}.
\end{align}
Without loss of generality, one can assume that $C = p(\Gamma) = H$. The infra-nilmanifolds modeled on the additive group $\R^m$ are exactly the closed flat Riemannian manifolds. 

Let $\alpha =(n, \phi) \in \Aff(N)$ be an affine transformation such that $\alpha \Gamma \alpha^{-1} = \Gamma$. Then $\alpha$ induces a diffeomorphism $\bar{\alpha}$ on the infra-nilmanifold $\Gamma \backslash N$, which is defined by 
\begin{align*}\bar{\alpha}: \Gamma \backslash N &\to \Gamma \backslash N \\ \Gamma x &\mapsto \bar{\alpha}(\Gamma x) := \Gamma \left( \alpha \cdot x \right) = \Gamma n\phi(x).\end{align*}  A diffeomorphism of an infra-nilmanifold  as above is called an {\em affine infra-nilmanifold automorphism}. The map $\bar{\alpha}$ is an Anosov diffeomorphism if and only if the linear part $\phi$ of $\alpha$ is hyperbolic, i.e.\ all the eigenvalues of $\phi$ are of absolute value different from $1$. The eigenvalues of $\phi$ are defined as the eigenvalues of the corresponding automorphism on the Lie algebra corresponding to $N$, so the eigenvalues of the differential of $\phi$ at the identity.

The only known examples of Anosov diffeomorphisms are those which are  topologically conjugate to affine infra-nilmanifold automorphisms. In fact, it is conjectured that every Anosov diffeomorphism is topologically conjugate to an affine infra-nilmanifold automorphism, see \cite{deki11-1,smal67-1}. Therefore  an important question is to study which infra-nilmanifolds admit an affine hyperbolic infra-nilmanifold automorphism. Note that an infra-nilmanifold admits an Anosov diffeomorphism if and only if it admits a hyperbolic affine infra-nilmanifold automorphism by \cite{deki11-1}.

\subsection{Rational holonomy representation}
For infra-nilmanifolds modeled on the abelian Lie group $\R^m$, the short exact sequence \eqref{seq} splits and thus gives rise to a natural representation of the holonomy group $$\rho: H \to \Aut(\Z^m) = \GL(m,\Z).$$ For general infra-nilmanifolds, this is not the case and therefore we introduce the rational holonomy representation associated to an infra-nilmanifold $\Gamma$, see \cite{dv11-1}.

Let $\n$ denote the nilpotent Lie algebra associated to the nilpotent Lie group $N$. It is known that if $N$ is $1$-connected, then  the exponential map $\exp: \n \to N$ is a diffeomorphism, see for example \cite{ragh72-1}. Let $\log : N \to \n$ denote the inverse of the map $\exp$. As before we take $M = \Gamma \cap N$ as uniform lattice in $N$ and let $\n^{\Q}$ denote the $\Q$-span of $\log(M)$, which is a rational Lie algebra by \cite{sega83-1}. We define $M^{\Q}$ as $\exp(\n^{\Q})$ and this group is known as the {\em rational Mal'cev completion or radicable hull of $M$}. It is the unique torsion-free radicable nilpotent group containing $M$, such that every element of $M^\Q$ has a positive power lying in $M$, see \cite{sega83-1}. The rational subalgebra $\lien^\Q$ is called a {\em rational form} of the real Lie algebra $\lien$.
 
To obtain the rational holonomy representation, one embeds the lattice $M$ into its rational Mal'cev completion $M^\Q$. Since every automorphism of $M$ uniquely extends to an automorphism of $M^\Q$, there exists a group $\Gamma^\Q$ which fits in the following commutative diagram:

$$ \xymatrix{
	1 \ar[r] & M \ar[r] \ar@{ >->}[d]& \Gamma \ar[r] \ar@{ >->}[d] & H \ar[r] \ar@{=}[d]& 1 \\
	1 \ar[r] & M^\Q \ar[r] & \Gamma^\Q \ar[r] & H \ar[r] & 1,}$$ where the bottom exact sequence splits, see \cite[Section 3.1]{deki96-1} for the details. By fixing a splitting morphism $s: H \to \Gamma^\Q$, we define the rational holonomy representation $\rho: H \to \Aut(M^\Q)$ by $$\rho(h)(n) = s(h) n s(h)^{-1}.$$ Equivalently, since $\Aut(M^\Q) \approx \Aut(\lien^\Q)$ under the exponential map, we can consider the representation as $\rho: H \to \Aut(\lien^\Q)$ into the automorphisms of the corresponding Lie algebra. We will not distinguish between these representations and call them both the rational holonomy representation of the almost-Bieberbach group $\Gamma$. The map $\rho$ is always injective, so the rational holonomy representation is faithful. Often we will identify the holonomy group with its image under the rational holonomy representation.

The rational holonomy representation does depend on the choice of $s$, but this dependence is not relevant in the study of Anosov diffeomorphisms. In fact, by the work of \cite[Theorem A]{dv08-1} the rational holonomy representation contains all information about the existence of Anosov diffeomorphisms.  \begin{theorem}
	\label{charkk}
Let $\Gamma \backslash N$ be an infra-nilmanifold with rational holonomy representation $\rho: H \to \Aut(M^\Q)$. Then $\Gamma \backslash N$ admits an Anosov diffeomorphism if and only if there exists an integer-like hyperbolic automorphism of $M^\Q$ which commutes with every element of $\rho(H)$.
\end{theorem}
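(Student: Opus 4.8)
The starting point is the reduction, due to \cite{deki11-1} and already quoted above, that $\Gamma \backslash N$ admits an Anosov diffeomorphism if and only if it admits a hyperbolic affine infra-nilmanifold automorphism, i.e.\ an affine transformation $\alpha = (n,\phi) \in \Aff(N)$ with $\alpha \Gamma \alpha^{-1} = \Gamma$ whose linear part $\phi$ is hyperbolic. The plan is to translate the condition ``$\alpha$ normalizes $\Gamma$ with $\phi$ hyperbolic'' into the rational condition in the statement, by passing from $\phi$ to an automorphism of $M^\Q$ and comparing it with the rational holonomy representation. Throughout one uses the identification $\Aut(M^\Q) \approx \Aut(\lien^\Q)$ under $\exp$, so that hyperbolicity, the integer-like property, and commutation with $\rho(H)$ may be read off on whichever side is convenient.

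For the direction asserting that an Anosov diffeomorphism forces the rational condition, suppose such an $\alpha = (n,\phi)$ exists. A direct computation shows that conjugation by $\alpha$ acts on the normal subgroup $N \trianglelefteq N \rtimes H$ as $m \mapsto n\phi(m)n^{-1}$, so the automorphism $\psi := c_n \circ \phi$ of $N$, where $c_n(x) = nxn^{-1}$, maps $M = \Gamma \cap N$ onto itself. Hence $\psi$ extends to an automorphism of $M^\Q$ preserving the lattice $M$, which is exactly the integer-like condition. Since inner automorphisms act trivially on the associated graded Lie algebra of the lower central series, $\psi$ and $\phi$ induce the same maps on each graded quotient and thus share the same eigenvalues; in particular $\psi$ is again hyperbolic. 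It remains to arrange commutation with $\rho(H)$: conjugation by $\alpha$ induces an automorphism of the finite quotient $H = \faktor{\Gamma}{M}$, and after replacing $\alpha$ by a suitable power (which keeps it hyperbolic and $\Gamma$-normalizing) this induced automorphism becomes trivial. Checking against the defining relation $s(h)\,m\,s(h)^{-1} = \rho(h)(m)$ of $\Gamma^\Q$ then shows that $\psi$ commutes with each $\rho(h)$ up to an inner automorphism of $M^\Q$, which can be absorbed into $\psi$ without disturbing its eigenvalues or its invariant lattice.

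For the converse, start from an integer-like hyperbolic $\psi \in \Aut(M^\Q)$ commuting with $\rho(H)$. Commutativity guarantees that the map fixing the splitting $s(H)$ and acting as $\psi$ on $M^\Q$ respects the relation $s(h)\,m\,s(h)^{-1} = \rho(h)(m)$, hence defines an automorphism $\Psi$ of $\Gamma^\Q$ inducing the identity on $H$. To descend $\Psi$ to a genuine group I would use the integer-like hypothesis to produce a full lattice in $\lien^\Q$ invariant under $\psi$, and then average it over the finite group $\rho(H)$, which commutes with $\psi$, to obtain a lattice $\widetilde{M}$ invariant under both $\psi$ and $\rho(H)$. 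From $\widetilde{M}$, the finitely many translation parts of $s(H)$, and a suitable power of $\psi$, one assembles a torsion-free almost-Bieberbach group $\widetilde{\Gamma}$ normalized by an affine map $(n',\psi^k)$, so that this map induces a hyperbolic affine automorphism and $\widetilde{\Gamma}\backslash N$ is Anosov. As $\widetilde{\Gamma}$ shares its rational Mal'cev completion and rational holonomy representation with $\Gamma$, and Anosov existence depends only on this rational data by \cite[Theorem A]{dv08-1}, the manifold $\Gamma \backslash N$ admits one as well.

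The main obstacle is precisely this passage between the integral and the rational levels. On the forward side one must control the inner automorphisms together with the finite-order automorphism induced on $H$, and verify that after taking a power and correcting by a single inner automorphism the resulting map commutes \emph{strictly} with $\rho(H)$. On the backward side one must build an honest torsion-free lattice invariant under both $\psi$ and the holonomy, check torsion-freeness of $\widetilde{\Gamma}$, and then invoke that Anosov existence is an invariant of the rational holonomy representation rather than of the individual lattice. Everything else — hyperbolicity being insensitive to inner automorphisms and to taking powers, and the extension $\Psi$ being well defined from commutation — is routine.
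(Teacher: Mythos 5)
First, a point of comparison: the paper does not prove this theorem at all --- it is quoted from \cite[Theorem A]{dv08-1} --- so your attempt has to stand on its own rather than be matched against an argument in the text. Your forward direction follows the standard route and is sound as a sketch: $\psi = c_n\circ\phi$ preserves $M=\Gamma\cap N$, preservation of a lattice gives the integer-like property, and $\mathrm{Ad}(n)$ is trivial on the graded quotients of the lower central series, so the characteristic polynomial (hence hyperbolicity) is unaffected. The ``absorption'' of the inner discrepancy is the one place you wave your hands, but it is fixable: after passing to a power acting trivially on $H$, the relation $\psi\,\rho(h)\,\psi^{-1}=c_{m_h}\,\rho(h)$ makes $h\mapsto m_h$ a $1$-cocycle of the finite group $H$ with values in the uniquely divisible nilpotent group $M^\Q$; such cocycles are coboundaries, so $m_h=m\,\rho(h)(m)^{-1}$ for a single $m$, and $c_m^{-1}\psi$ commutes strictly with $\rho(H)$ while having the same characteristic polynomial as $\psi$. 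You should say this rather than assert that the inner parts ``can be absorbed.''

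The genuine gap is in the converse. Your plan is to build a \emph{new} almost-Bieberbach group $\widetilde\Gamma$ from an invariant lattice and the translation parts of $s(H)$, show $\widetilde\Gamma\backslash N$ is Anosov, and then transfer the conclusion to $\Gamma\backslash N$ ``because Anosov existence depends only on the rational data by \cite[Theorem A]{dv08-1}.'' That appeal is circular: the assertion that existence of an Anosov diffeomorphism depends only on the rational holonomy representation \emph{is} the theorem being proved. Moreover, constructing a torsion-free $\widetilde\Gamma$ with prescribed holonomy is itself the hard problem (the paper makes exactly this point in Section \ref{sec:examples}), so the detour through $\widetilde\Gamma$ gains nothing. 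The correct argument keeps $\Gamma$ fixed: extend $\psi$ to the automorphism $\Psi$ of $\Gamma^\Q = M^\Q\rtimes s(H)$ fixing $s(H)$ pointwise (this is where strict commutation with $\rho(H)$ is used); since $\psi$ is integer-like it preserves some lattice commensurable with $M$, and a finiteness argument on the subgroups squeezed between two $\Psi$-invariant groups shows that a suitable power satisfies $\Psi^k(\Gamma)=\Gamma$; finally, the generalized second Bieberbach theorem (affine realizability of automorphisms of almost-Bieberbach groups) produces $\alpha=(n,\phi)\in\Aff(N)$ with $\alpha\gamma\alpha^{-1}=\Psi^k(\gamma)$ for all $\gamma\in\Gamma$, and $\phi$ differs from $\psi^k$ by an inner automorphism, hence is hyperbolic. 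Without the power-taking step and the affine realizability input, your converse does not close.
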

\noindent Here, {\em integer-like} means that the characteristic polynomial of the corresponding automorphism on the Lie algebra $\lien^\Q$ has coefficients in $\Z$ and constant term $\pm 1$. An automorphism $\varphi \in \Aut(M^\Q)$ satisfying the conditions of the theorem is called {\em Anosov} and every rational Lie algebra admitting an Anosov automorphism is called an {\em Anosov Lie algebra}. Therefore studying Anosov diffeomorphisms on infra-nilmanifolds is equivalent to studying Anosov automorphisms of rational Lie algebras commuting with certain finite subgroups of the Lie algebra. In this paper, we restrict ourselves to the case of Lie algebras associated to graphs, for which we give a full description of the automorphism group in the next section.

\section{The automorphism group of Lie groups associated to graphs}
\label{sec:fullauto}

In this section we describe the automorphism group of Lie groups, or equivalently Lie algebras, associated to graphs, improving the result of \cite{dm05-1} which describes only the connected component of the identity element. The methods lead to an induced action of the holonomy group on the coherent components of the graph, which is crucial for our final results. We start by recalling the construction of a $2$-step nilpotent Lie algebra $\lien_\G^K$ associated to a finite simple graph $\G$ as in \cite{dm05-1}) over any field $K$ of characteristic $0$.

\subsection{Lie algebra associated to graphs}\label{subsec:graphs}

Let $\mathcal{G} = (S, E)$ denote a finite simple graph where $S$ is the set of vertices and $E$ is the set of edges.  We associate with $\mathcal{G}$ a 2-step nilpotent Lie algebra $\lien_\mathcal{G}^K$ over any field  $K$ of characteristic $0$ in the following way. The underlying vector space of $\lien_\mathcal{G}^K$ is $V \oplus  W$ where $V$ is the $K$-vector space with basis $S$ and $W$ is the subspace of $\bigwedge^2 V$ spanned by  $\{\alpha \wedge \beta \mid \alpha \beta \in E\}$. The Lie bracket structure on $\lien_\mathcal{G}^K$ is  given by the following  relations:
\begin{enumerate}
  \item  $[\alpha, \beta] = \alpha \wedge \beta$ for all $\alpha, \beta \in S$ with $\alpha \beta \in E$,
  \item $[\alpha, \beta] = 0$ for all $\alpha, \beta \in S$ with $\alpha \beta \notin E$, and 
  \item  $[u, v] = 0$ for all $u, v \in \lien_\mathcal{G}^K$ with either $u \in W$ or $v \in W$.  
\end{enumerate}
%\textbf{todo Meera: introduce COHERENT COMPONENTS FOR GRAPHS, with equivalence relation}

Recall that the automorphism group of a Lie algebra over the field $K$ is a $K$-linear algebraic group, i.e.\ it is given by the $K$-rational points of a subgroup $G$ of the general linear group over a complex vector space which is defined as the zero set of polynomial equations over $K$. We denote the subgroup of $K$-rational points as $G(K)$. For more details and terminology about these groups, we refer to \cite{bore91-1,hump81-1}. In the remainder of this paper we consider only the Zariski topology on linear algebraic groups, meaning that for example $\GL(V)$ is connected. 

If we denote by $$T = \{\varphi \in \Aut(\lien_\G^K) \mid \varphi(V) = V \}$$ and $$U = \{\varphi \in \Aut(\lien_\G^K) \mid \forall x \in \lien_\G^K: \hspace{1mm}  \varphi(x) - x \in W  \},$$ then both $T$ and $U$ are $K$-linear algebraic subgroups of $\Aut(\lien_\G^K)$. The automorphism group is equal to the semidirect product $\Aut(\lien_\G^K) = U \rtimes T$ by \cite[Proposition 2.1.]{dm05-1}. Note that $U$ consists of unipotent elements and hence forms a subgroup of the unipotent radical of $\Aut(\lien_\G^K)$. 

Under the natural projection map $p: T \to \GL(V) = \GL\left(\faktor{\lien_\G^K}{[\lien_\G^K,\lien_\G^K]}\right)$, we get that the image $G = p(T)$ is a $K$-linear algebraic group containing the diagonal matrices. In fact as shown in \cite{dm05-1}, one can see that this property characterizes the class of $2$-step nilpotent Lie algebras associated to graphs. Moreover, the map $p$ is injective and hence every element $g \in G$ uniquely corresponds to an automorphism $\varphi \in T$ with $p(\varphi) = g$. The groups $T$ and $G$ are hence isomorphic as $K$-linear algebraic groups.

\begin{example}
If $\G$ is the complete graph on $n$ vertices, then $\lien_\G^K$ is isomorphic to the free $2$-step nilpotent Lie algebra on $n$ generators. If $\G$ is the discrete graph on $n$ vertices, then $\lien_\G^K \approx K^n$ is an abelian Lie algebra. Hence Lie algebras associated to graphs intermediate between free nilpotent Lie algebras and abelian Lie algebras. Note that in both these cases the map $p: T \to \GL(V)$ is surjective.
\end{example}

In the next paragraphs, we study general linear algebraic groups containing the subgroup of diagonal matrices and afterwards apply these results to the specific case of Lie algebras associated to graphs.
  
\subsection{Linear algebraic groups containing $D$} \label{relations}

In this section, $G \le \GL(V)$ is a linear algebraic group defined over a field $K$ of characteristic $0$, where $V$ is a finite dimensional complex vector space. At the end of the section, we will interpret the results for the $K$-rational points of the group $G$, which corresponds to vector spaces over the field $K$.

We fix a basis $S$ for the vector space $V$ and denote by $D= D_S$ the subgroup of $\GL(V)$ consisting of all the {\em diagonal endomorphisms with respect to $S$}, i.e.  the linear endomorphisms  given by the  diagonal matrices with respect to the basis $S$. Then $D_S$ is a maximal torus of $\GL(V)$, i.e. a maximal connected abelian subgroup consisting of semisimple elements. The identity map on $V$ is denoted as $I_V$. The goal is to describe the groups $G$ with $D_S \le G$ or equivalently the linear algebraic groups over $K$ which contain a $K$-split torus of maximal rank. 

We start by recalling the work of \cite{dm05-1} about these connected linear algebraic groups, in particular the definition of the partial order relation $\prec$ associated to them. Using the relation $\prec$, we compute the normalizer in $\GL(V)$ of these groups in terms of permutation matrices, leading to a general description.

\paragraph{\textbf{The connected case}} For every $\alpha, \beta \in S$, we denote by $E_{\alpha\beta} \in \End(V)$ the linear map which is defined by $$E_{\alpha\beta}(\gamma) = 
\begin{cases} \hspace{1mm} \alpha & \text{if } \gamma = \beta, \\ \hspace{1mm}
    0           & \text{if } \gamma \neq \beta,
\end{cases}$$ for every basis vector $\gamma \in S$. Let $\lie$ denote the Lie algebra of $G$, which we consider as a Lie subalgebra of $\End(V)$.

First we  define a relation $\prec$ on $S$, using the maps $E_{\alpha\beta}$, as follows : $$ \text{ For } \alpha, \beta \in S, \text{ we say that }  \alpha \prec \beta  \text{ whenever } E_{\alpha \beta} \in \lie.$$ 
 It is easy to see that $\prec$ is a reflexive  (since  $D_S \le G$) and transitive relation on $S$. We start by giving an equivalent definition for the relation $\prec$ depending on the linear algebraic group $G$ itself.
\begin{proposition}
\label{1dimsubgroup}
Let $G \le \GL(V)$ be a linear algebraic group containing the subgroup $D_S$ of  all the diagonal endomorphisms with respect to the basis $S$ of $V$. Let $\alpha, \beta \in S$ with $\alpha \neq \beta$. The following statements are equivalent:
\begin{enumerate}
\item $\alpha \prec \beta$
%\item $I_V + E_{\alpha,\beta} \in G$
\item $I_V + t E_{\alpha\beta} \in G(K)$ for every $t \in K$.
\end{enumerate}
\end{proposition}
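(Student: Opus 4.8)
The plan is to exploit the fact that the elementary map $E_{\alpha\beta}$ is nilpotent, so that the matrix exponential collapses to a linear expression and the one-parameter family in statement (2) is literally the one-parameter subgroup generated by $E_{\alpha\beta}$. Concretely, since $\alpha \neq \beta$ one computes $E_{\alpha\beta}^2 = 0$ (applying $E_{\alpha\beta}$ twice sends every basis vector to $0$), and hence $\exp(t E_{\alpha\beta}) = I_V + t E_{\alpha\beta}$ for every scalar $t$. Thus the set $\{I_V + t E_{\alpha\beta} : t \in K\}$ is exactly the image of the algebraic morphism $t \mapsto \exp(t E_{\alpha\beta})$ restricted to $t \in K$, and the whole point of the proposition is to match ``the generator $E_{\alpha\beta}$ lies in $\lie$'' with ``this one-parameter family lies in $G(K)$.''

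For $(1) \Rightarrow (2)$, I would use that $E_{\alpha\beta} \in \lie$ is a nilpotent element of the Lie algebra of the algebraic group $G$. For a nilpotent element $X$ of $\lie$, the map $t \mapsto \exp(tX)$ is a morphism of algebraic groups from the additive group into $G$; hence $\exp(t E_{\alpha\beta}) = I_V + t E_{\alpha\beta}$ lies in $G$ for every $t \in \C$, in particular for every $t \in K$. Finally I observe that the entries of $I_V + t E_{\alpha\beta}$ lie in $K$ whenever $t \in K$ (the matrix of $E_{\alpha\beta}$ has entries in $\{0,1\}$), so that this element is a $K$-rational point of $G$, i.e.\ lies in $G(K)$ as required.

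For the converse $(2) \Rightarrow (1)$, I would first upgrade the hypothesis from $K$-points to all complex parameters by a Zariski-density argument: the polynomial map $c : \mathbb{A}^1 \to \GL(V)$, $c(t) = I_V + t E_{\alpha\beta}$, sends the infinite (hence Zariski-dense) subset $K \subseteq \C$ into the Zariski-closed set $G$, so $c(t) \in G$ for all $t \in \C$. The image $c(\C)$ is therefore a closed one-parameter subgroup of $G$, and differentiating the curve $c$ at $t = 0$ yields $c'(0) = E_{\alpha\beta} \in \lie$, which is precisely $\alpha \prec \beta$.

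The step I expect to require the most care is the passage between the field $K$ of definition and the complex points used to define $\lie$: one must know that the algebraic exponential of a nilpotent element of $\lie$ genuinely lands inside the group $G$ (not merely inside $\GL(V)$), and, in the reverse direction, that $\lie$ can be detected by differentiating curves whose $K$-points already lie in $G$. Both rest on the standard structure theory of linear algebraic groups in characteristic $0$ together with the Zariski density of the infinite field $K$ in the affine line, so the argument is short once these facts are invoked; the remaining computations ($E_{\alpha\beta}^2 = 0$ and the rationality of the entries) are routine.
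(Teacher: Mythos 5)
Your proof is correct and follows essentially the same route as the paper: both directions hinge on $E_{\alpha\beta}^2=0$ so that $\exp(tE_{\alpha\beta})=I_V+tE_{\alpha\beta}$, with the forward direction exponentiating the Lie algebra element and the converse recovering $E_{\alpha\beta}\in\lie$ by differentiating the one-parameter family after extending it from $K$-points to all scalars. The only cosmetic difference is in that extension step: the paper invokes density of $\Q$ in $\R$, whereas you use Zariski density of the infinite field $K$ in the affine line, which is arguably the cleaner formulation in the algebraic-group setting.
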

 \begin{proof}
If $\alpha \prec \beta$ and $\alpha \neq \beta$, then by definition of $\prec$, we have that $E_{\alpha\beta} \in \lie$ and hence  $\exp(tE_{\alpha\beta}) \in G$ for $t \in K$. We know that $(E_{\alpha,\beta})^2 = 0$ as $\alpha \neq \beta$ which implies that  $I_V + t E_{\alpha\beta} = \exp(tE_{\alpha\beta}) \in G(K)$ for every $t \in K$. 

Conversely, assume that $I_V + t E_{\alpha\beta} \in G$ for every $t \in K$ and $\alpha \neq \beta$. Since $\Q \subset K$ and $\Q$ is dense in $\R$, we have $\exp(t E_{\alpha\beta}) \in G$ for all $t \in \R$ and hence $E_{\alpha\beta}  \in \lie$. 
\end{proof}

This shows that the relation $\prec$ does not depend on the field $K$ we are working with. The group of permutations on $S$ preserving $\prec$ is important for the remainder of the paper.
%
%****\cb{ We need to change this. This is because (I think) the proof of the above Proposition is not similar to the proof of  \cite[Proposition 4.1]{dm05-1} I also believe that in  \cite[Proposition 4.1]{dm05-1}, we have used that $G$ is a subgroup of $GL(V)$ consisting of automorphisms which can be extended to automorphisms of the 2-step nilpotent Lie algebras}. 
%\noindent Note that \cite[Proposition 4.1]{dm05-1} only states this result under some additional assumptions, but the proof also works for the general case. The relation $\prec$ on the group $G$ corresponds to the relation $\prec$ above for graphs, hence we use the same notation. The group of permutations preserving the relation $\prec$ will be crucial for the remainder of the paper. *************

\begin{definition}

Let $A$ denote  a  finite set with a relation $R$ and let $\Perm(A)$ denote the set of all permutations of $A$.                   
A permutation $\sigma \in \Perm(A)$ {\em preserves the relation $R$}  if for $x, y \in A$,   $\sigma(x) R \sigma(y)$ whenever $xRy$.  By $\Perm(A, R)$ we denote the set of all permutations of $A$ preserving the relation $R$.
\end{definition}
\noindent The set $\Perm(A, R)$ forms a group under composition because the set $A$ is assumed to be finite.

We define now an equivalence relation $\sim$ on $S$ as follows. For $\alpha, \beta \in S$, we say that $\alpha \sim \beta$ if either $\alpha = \beta$ or both $E_{\alpha \beta}$ and $E_{\beta \alpha}$ are in $\lie$. Equivalently, $\alpha \sim \beta$ if and only if  $\alpha \prec \beta$ and $\beta \prec \alpha$. Let $\{S_{\lambda}\}_{\lambda \in \Lambda}$  denote the set of all equivalence classes under this equivalence relation and will be called {\em coherent components}.  With abuse of notation, we will denote a coherent component $S_\lambda$ by $\lambda$.  By construction the relation $\prec$ induces a partial order $\oprec$ on $\Lambda$ defined as follows:  $\lambda \hspace{1mm}\oprec \hspace{1mm} \mu$ for $\lambda, \mu \in \Lambda$ if  $\alpha \prec \beta$ for some (and hence every) $\alpha \in \lambda$ and $\beta \in \mu$. Moreover, the elements of $\Lambda$ can be enumerated as $\lambda_1, \ldots, \lambda_k$ such that  if $\lambda_i  \hspace{1mm} \oprec  \hspace{1mm} \lambda_j$ then  $i < j$. 
Any permutation $\sigma \in \Perm(S,\prec)$ induces a permutation $\bar{\sigma} \in \Perm(\Lambda, \oprec)$ on the equivalence classes. 
%\cb{(Here we need that if $\sigma \in \Perm(S,\prec)$, then $\sigma^{-1}  \in \Perm(S,\prec)$ to say that $\sigma(S_\lambda)$ is an equivalence class.)}

For $\lambda \in \Lambda$, we  denote  a subspace of $V$ spanned by the elements of the equivalence class $\lambda$ by $V_\lambda$. Note that $V = \displaystyle \bigoplus_{\lambda \in \Lambda} V_\lambda$. We embed the subgroup $\GL(V_\lambda)$ into $\GL(V)$ by taking the automorphism as the identity on each $V_\mu,  \lambda \neq \mu \in \Lambda$.

\begin{theorem}[\cite{dm05-1}]
	\label{conncomp}
Let $G \le \GL(V)$ be a connected linear algebraic group containing the subgroup $D$ of diagonal matrices, then
 \begin{equation}
 G =  \left( \displaystyle \prod_{\lambda \in \Lambda}\GL (V_{\lambda}) \right) M
\label{G^0}
\end{equation}
where  $M$ is the unipotent radical of $G$.
\end{theorem}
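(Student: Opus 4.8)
The plan is to reduce the entire statement to the weight-space decomposition of $\lie$ under the adjoint action of the maximal torus $D = D_S$, and then to lift the resulting splitting of $\lie$ to the group $G$ using its connectedness. First I would note that, since $D$ is a maximal torus of $\GL(V)$ and $D \le G$, it is also a maximal torus of $G$, so $\lie$ is stable under $\mathrm{Ad}(D)$. The adjoint action of $D$ decomposes $\End(V) = \bigoplus_{\alpha,\beta \in S} \C E_{\alpha\beta}$ into one-dimensional weight spaces, where $\C E_{\alpha\beta}$ carries the character $\chi_\alpha\chi_\beta^{-1}$ and the zero-weight space is exactly the full diagonal $\mathfrak{d} = \lie(D)$. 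Because $\lie$ contains $\mathfrak{d}$ and is $\mathrm{Ad}(D)$-stable, it is a sum of those weight spaces it meets; as each is one-dimensional, the intersection is all-or-nothing, and by the very definition of $\prec$ one obtains $\lie = \mathfrak{d} \oplus \bigoplus_{\alpha \neq \beta,\, \alpha \prec \beta} \C E_{\alpha\beta}$. This decomposition is the backbone of the whole argument.

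Next I would split this sum according to the equivalence relation $\sim$. Set $\mathfrak{l} = \bigoplus_{\lambda \in \Lambda} \mathfrak{gl}(V_\lambda)$, which is the span of $\mathfrak{d}$ together with those $E_{\alpha\beta}$ with $\alpha \sim \beta$, and set $\liem = \bigoplus_{\alpha \prec \beta,\, \alpha \not\sim \beta} \C E_{\alpha\beta}$, the span of the remaining root vectors; then $\lie = \mathfrak{l} \oplus \liem$ as vector spaces. I would check that $\mathfrak{l}$ is a subalgebra: for $\alpha \sim \beta$ both $E_{\alpha\beta}$ and $E_{\beta\alpha}$ lie in $\lie$, so each $\mathfrak{gl}(V_\lambda)$ sits inside $\lie$, and distinct components act on the complementary subspaces $V_\lambda$, giving the direct product structure. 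The essential computation is that $\liem$ is a nilpotent ideal of $\lie$: using the enumeration $\lambda_1,\dots,\lambda_k$ compatible with $\oprec$, every $E_{\alpha\beta} \in \liem$ is strictly block-upper-triangular, so $\liem$ consists of nilpotent endomorphisms, and a short case analysis of the brackets $[E_{\gamma\delta}, E_{\alpha\beta}]$ with $E_{\gamma\delta} \in \lie$ and $E_{\alpha\beta} \in \liem$, invoking transitivity of $\prec$, shows that each nonzero bracket is again a root vector $E_{\gamma\beta}$ or $E_{\alpha\delta}$ whose endpoints still lie in strictly distinct coherent components, hence in $\liem$.

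Finally I would transfer this to $G$. Since $\mathfrak{gl}(V_\lambda) \subseteq \lie$ and $\GL(V_\lambda)$ is connected, each $\GL(V_\lambda) \le G$, so the reductive subgroup $L = \prod_{\lambda} \GL(V_\lambda) \le G$ has Lie algebra $\mathfrak{l}$. Let $M$ denote the connected subgroup with Lie algebra $\liem$; as $\liem$ is an ideal consisting of nilpotent elements, $M$ is a connected unipotent normal subgroup of $G$. Because $M$ is normal and $L$ is closed, $LM$ is a closed subgroup whose Lie algebra contains $\mathfrak{l} + \liem = \lie$, whence $LM = G$ by connectedness of $G$. It then remains to identify $M$ as the unipotent radical: the quotient $G/M \cong L/(L \cap M)$ is reductive, being a quotient of the reductive group $L$, so $M \supseteq R_u(G)$, while $M$ unipotent and normal forces $M \subseteq R_u(G)$; hence $M = R_u(G)$, establishing \eqref{G^0}.

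I expect the main obstacle to be the ideal computation in the second paragraph, namely verifying $[\lie, \liem] \subseteq \liem$, since this is precisely where the combinatorics of the partial order $\oprec$ enters and where one must rule out brackets collapsing back into $\mathfrak{l}$ by producing a pair of $\sim$-equivalent endpoints. Everything preceding it is the standard toral weight decomposition, and everything following it is routine algebraic-group bookkeeping via the Lie correspondence for connected groups together with the characterization of the unipotent radical through reductivity of the quotient.
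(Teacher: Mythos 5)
Your proof is correct. The paper itself offers no proof of this theorem --- it imports it from the cited work of Dani and Mainkar, remarking only that their statement for connected Lie groups implies the algebraic-group version --- and your route via the $\mathrm{Ad}(D)$-weight decomposition $\lie = \mathfrak{d}\oplus\bigoplus_{\alpha\prec\beta,\,\alpha\neq\beta}\C E_{\alpha\beta}$, the splitting $\lie=\mathfrak{l}\oplus\liem$ along $\sim$, and the lift to $G=LM$ by connectedness is essentially the argument of that reference. The step you flag as the main obstacle, $[\lie,\liem]\subseteq\liem$, does go through exactly as you indicate: if $\delta=\alpha$ then $\gamma\prec\beta$ by transitivity, and $\gamma\sim\beta$ would force $\beta\prec\gamma\prec\alpha$ and hence $\alpha\sim\beta$, contradicting $E_{\alpha\beta}\in\liem$ (the case $\beta=\gamma$ is symmetric), so the bracket never collapses back into $\mathfrak{l}$.
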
 The group $M$ is generated by the elements $I_V + t E_{\alpha,\beta}$ with $\alpha \prec \beta$ and $\beta \nprec \alpha$. We note that the subgroup $L = \displaystyle \prod_{i=1}^k\GL (V_{\lambda_i}) $ is a maximal reductive subgroup of $G$, which is also called a Levi subgroup of $G$. The original result in \cite{dm05-1} is for connected Lie groups, but it implies the above result for linear algebraic groups.

The following lemma is useful for studying the full automorphism group of Lie algebras associated to graphs. Let $V$ be any finite-dimensional vector space given as a direct sum $V = \displaystyle \bigoplus_{i \in I} V_i$, then we write $\hat{V}_j = \displaystyle \bigoplus_{\substack{i \in I \\ i \neq j}} V_i$. Again, we embed $\GL(V_j)$ as a subgroup of $\GL(V)$ in the natural way by taking identity on the other components, so the elements $d \in \GL(V_j)$ are exactly the ones for which $d(V_j)=V_j$ and $d$ is the identity on $\hat{V}_j$.
\begin{lemma}
	\label{otherdefpi}
	Let $V = \displaystyle \bigoplus_{i \in I} V_i$ be any finite-dimensional vector space, given as a direct sum of subspaces. For every $g \in \GL(V)$, the following are equivalent: $$g \GL(V_j) g^{-1} = \GL(V_k) \iff g(V_j) = V_k \text{ and } g(\hat{V}_j) = \hat{V}_k.$$
\end{lemma}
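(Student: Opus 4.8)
The plan is to prove both implications separately, where the reverse implication is a direct computation and the forward implication rests on giving \emph{intrinsic}, conjugation-equivariant descriptions of the subspaces $V_j$ and $\hat{V}_j$ purely in terms of the abstract subgroup $\GL(V_j) \le \GL(V)$.

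For the implication $\Leftarrow$, I would assume $g(V_j) = V_k$ and $g(\hat{V}_j) = \hat{V}_k$. Given $d \in \GL(V_j)$, which by the stated embedding means $d(V_j) = V_j$ and $d|_{\hat{V}_j} = \mathrm{id}$, I would check directly that $g d g^{-1}$ preserves $V_k$ (since $g^{-1}(V_k) = V_j$, then $d(V_j)=V_j$, then $g(V_j)=V_k$) and restricts to the identity on $\hat{V}_k$ (since $g^{-1}(\hat{V}_k) = \hat{V}_j$, where $d$ acts trivially). Hence $g d g^{-1} \in \GL(V_k)$, giving $g\GL(V_j)g^{-1} \subseteq \GL(V_k)$; applying the same argument to $g^{-1}$, which maps $V_k \to V_j$ and $\hat{V}_k \to \hat{V}_j$, yields the reverse inclusion.

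For the implication $\Rightarrow$, the key step is to recover each subspace from the group in an equivariant way. I claim that
\[
\hat{V}_j = \{v \in V : d(v) = v \text{ for all } d \in \GL(V_j)\} \quad\text{and}\quad V_j = \sum_{d \in \GL(V_j)} \operatorname{im}(d - I_V).
\]
The first identity holds because $\GL(V_j)$ acts on $V_j$ without nonzero common fixed vector (any nonzero $v \in V_j$ is moved by multiplication by a scalar $c \neq 0,1$ on $V_j$) while fixing $\hat{V}_j$ pointwise; the second holds because each $\operatorname{im}(d - I_V) \subseteq V_j$ and, taking $d$ to be that same scalar map, $\operatorname{im}(d - I_V) = V_j$, so the sum is exactly $V_j$. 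Both descriptions are manifestly equivariant: conjugation by $g$ sends $\operatorname{im}(d - I_V)$ to $\operatorname{im}(gdg^{-1} - I_V)$, and it sends the common fixed space of a subgroup to the common fixed space of its conjugate. Applying $g$ and using $g\GL(V_j)g^{-1} = \GL(V_k)$ then gives $g(V_j) = \sum_{h \in \GL(V_k)} \operatorname{im}(h - I_V) = V_k$ and likewise $g(\hat{V}_j) = \hat{V}_k$, where the final equalities are the same two formulas read off for the index $k$.

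The only place where care is genuinely needed, rather than a serious obstacle, is the degenerate case and the verification that the two formulas really reproduce $\hat{V}_j$ and $V_j$. If $V_j = 0$ then $\GL(V_j)$ is trivial, its fixed space is all of $V = \hat{V}_j$ and the sum of images is $0 = V_j$, so both identities persist; the equality of groups then forces $\GL(V_k)$ trivial, i.e.\ $V_k = 0$, and both conclusions hold trivially. Once the two intrinsic formulas are established, equivariance transports everything from $j$ to $k$ with no further computation, which is why I would structure the argument around them.
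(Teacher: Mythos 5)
Your proof is correct and follows essentially the same route as the paper: the $\Leftarrow$ direction is identical, and your $\Rightarrow$ direction rests on the same witness element (dilation by $2$ on $V_j$, identity on $\hat{V}_j$) that the paper conjugates and analyses via its eigenspaces --- your intrinsic descriptions of $\hat{V}_j$ as the common fixed space of $\GL(V_j)$ and of $V_j$ as $\sum_{d}\operatorname{im}(d-I_V)$ are exactly the eigenspaces for $1$ and $2$ of that element, restated equivariantly. The only (cosmetic) difference is that your formulation delivers the equalities $g(V_j)=V_k$ and $g(\hat{V}_j)=\hat{V}_k$ in one pass, whereas the paper first obtains inclusions and then applies the same argument to $g^{-1}$.
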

\noindent The proof is immediate, but we present it for completeness.
\begin{proof}
First assume that $g \GL(V_j) g^{-1} = \GL(V_k)$. Consider the map $d: V \to V$ given by $d(x) = x$ for $x \in \hat{V}_j$ and $d(y) = 2y$ for $y \in V_j$. Since $d \in \GL(V_j)$, we have $d^\prime = g d g^{-1} \in \GL(V_k)$ where $d^\prime$ has eigenvalues $1$ and $2$ just as the map $d$. In particular, by considering the eigenspaces for eigenvalue $1$ and $2$, we have $\hat{V}_k \subset g(\hat{V}_j)$ and $g(V_j) \subset V_k$. The reverse inclusions follow from considering $g^{-1}$, thus the first implication follows.

Now assume that $g(V_j) = V_k$ and $g(\hat{V}_j) = \hat{V}_k $. It suffices to show that the inclusion $g \GL(V_j) g^{-1} \subset \GL(V_k)$ holds, since we can apply the same statement to the element $g^{-1}$. If $d \in \GL(V_j)$, then $d(V_j)=V_j$ and so $g d g^{-1} \left(g(V_j)\right) = V_k$. Similarly since $d$ is the identity map on $\hat{V}_j$, then $g d g^{-1}$ is the identity map on $g(\hat{V}_j) = \hat{V}_k$ and thus the inclusion follows.
\end{proof}
We will apply this lemma on the decomposition $V = \displaystyle \bigoplus_{\lambda \in \Lambda} V_\lambda$ for finding the induced action on $\Lambda$.
\paragraph{\textbf{The general case}} 
For any permutation $\sigma$ of $S$, we denote by $P_\sigma \in \GL(V)$ the element defined by $$P_{\sigma}(\alpha) = \sigma(\alpha)$$ for all $\alpha \in S$. The matrix of $P_\sigma$ with respect to $S$ is the permutation matrix corresponding to the permutation $\sigma$. Note that at some places in literature, this is called the permutation matrix corresponding to the permutation $\sigma^{-1}$. Denote  by $$P: \Perm(S) \to \GL(V)$$ the group homomorphism given by $P(\sigma) = P_\sigma$. An easy computation shows that $  P_\sigma \comp E_{\alpha\beta} \comp P_{\sigma^{-1}} = E_{\sigma(\alpha)\sigma(\beta)}$ for all $\sigma \in \Perm(S)$ and for all $\alpha, \beta \in S$. In particular, for $ \sigma \in \Perm(S, \prec)$,  we have that \begin{align}\label{eq:induced}
\begin{split}
P_\sigma(V_\lambda) &= V_{\overline{\sigma}(\lambda)} \\P_\sigma(\hat{V}_\lambda) &= \hat{V}_{\overline{\sigma}(\lambda)}\end{split}
 \end{align} for every coherent component $\lambda$, which we will need later in the paper. Here we recall that $\overline\sigma \in \Perm(\Lambda, \oprec)$ is the permutation induced  by $\sigma$ and $\hat{V}_\lambda  = \displaystyle \bigoplus_{\substack{\mu \in \Lambda \\ \mu \neq \lambda}} V_\mu.$

\smallskip
The following fact about linear algebraic groups plays an important role.

\begin{proposition}         
\label{permute}
Let $G \leq GL(V)$ be a linear algebraic group which contains $D= D_S$ as a subgroup. If $G^0$ denotes the connected component of identity in $G$, then  \begin{equation}\label{S1}G =  G^0 \hspace{1mm} \big( G \cap P(\Perm(S)) \big)= \big( G \cap P(\Perm(S)) \big) \hspace{1mm}  G^0.\end{equation}
\end{proposition}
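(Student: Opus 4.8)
The plan is to reduce the statement to an assertion about the normalizer of the torus $D = D_S$ inside $G$, and then to identify that normalizer explicitly with monomial matrices. Since $D$ is connected, it lies in the identity component $G^0$, and because $D$ is a maximal torus of $\GL(V)$ it is automatically a maximal torus of $G^0$ as well (any torus of $G^0$ containing $D$ is a torus of $\GL(V)$ containing $D$, hence equals $D$). The decisive reduction is to establish that $G = G^0 \cdot N_G(D)$, where $N_G(D)$ denotes the normalizer of $D$ in $G$; once this is in hand, all the remaining work shifts to understanding $N_G(D)$.

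For this reduction I would argue as follows. Take any $g \in G$. As $G^0$ is normal in $G$, conjugation by $g$ restricts to an automorphism of $G^0$, so $g D g^{-1}$ is again a maximal torus of $G^0$. By the conjugacy of maximal tori in the connected linear algebraic group $G^0$, there exists $h \in G^0$ with $h(gDg^{-1})h^{-1} = D$, that is, $hg \in N_G(D)$. Writing $g = h^{-1}(hg)$ then places $g$ in $G^0 \cdot N_G(D)$, so $G = G^0 \cdot N_G(D)$. This is the step I expect to carry the most weight, since it rests entirely on the conjugacy theorem for maximal tori, which is the essential structural input; everything after this is comparatively mechanical.

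It remains to show that $N_G(D) \subseteq G^0 \cdot (G \cap P(\Perm(S)))$. Here I would use that the normalizer $N_{\GL(V)}(D)$ of the diagonal torus in $\GL(V)$ consists precisely of the monomial matrices, and that every monomial matrix factors uniquely as $d\, P_\sigma$ with $d \in D$ and $P_\sigma$ a permutation matrix. Given $n \in N_G(D) = G \cap N_{\GL(V)}(D)$, write $n = d\, P_\sigma$; since $d \in D \subseteq G^0 \subseteq G$ and $n \in G$, we obtain $P_\sigma = d^{-1} n \in G$, hence $P_\sigma \in G \cap P(\Perm(S))$. Thus $n = d\, P_\sigma \in G^0 \cdot (G \cap P(\Perm(S)))$. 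Combining this with the previous paragraph and absorbing the extra $G^0$-factor yields $G = G^0 \cdot (G \cap P(\Perm(S)))$, the reverse inclusion being trivial since both factors lie in the group $G$.

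Finally, the second equality $G = (G \cap P(\Perm(S))) \cdot G^0$ follows formally from the normality of $G^0$: for $g_0 \in G^0$ and $P_\sigma \in G \cap P(\Perm(S))$ one has $g_0 P_\sigma = P_\sigma\, (P_\sigma^{-1} g_0 P_\sigma)$ with $P_\sigma^{-1} g_0 P_\sigma \in G^0$, so the two products agree as subsets of $G$. The only genuinely nontrivial ingredient in the whole argument is the conjugacy of maximal tori inside a connected linear algebraic group; the rest is bookkeeping with the monomial-matrix description of $N_{\GL(V)}(D)$.
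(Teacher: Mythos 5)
Your proposal is correct and follows essentially the same route as the paper: conjugacy of maximal tori in the connected group $G^0$ to move an arbitrary $g\in G$ into the normalizer of $D$, followed by the monomial-matrix description $N_{\GL(V)}(D)=D\,P(\Perm(S))$ and the observation that the permutation factor lands in $G$. The only difference is that the paper spells out a proof of the normalizer computation for completeness, while you quote it as a standard fact; your explicit remark that $P_\sigma=d^{-1}n\in G$ is a point the paper leaves implicit.
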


\begin{proof}The second equality of equation (\ref{S1}) follows from the fact that $G^0$ is a normal subgroup of $G$. We prove the first equality by showing two inclusions. It is immediate that $ G^0   \big( G \cap P(\Perm(S)) \big)\subset G$, so it suffices to show the reverse inclusion.

% \cb{****I am including a proof of this. We can delete it if you think it's not necessary.****	
First, for the sake of completeness, we include a proof of a standard known fact that the normalizer of $D$ in $\GL(V)$ is given by \begin{align}\label{normalizerD}
N_{GL(V)}(D) &=  D P(\Perm(S)).\end{align} One inclusion is immediate, since for $\sigma \in \Perm(S)$ and $d \in D$, we have $P_\sigma d P_\sigma^{-1} \in D$. 
%In fact, for $\alpha \in S$,   $P_\sigma d P_\sigma^{-1} (\alpha) = a \alpha$ if and only if $d((\sigma^{-1}(\alpha)) = a \sigma^{-1}(\alpha)$.
%}
For the other inclusion, suppose that $g \in N_{GL(V)}(D)$ and $d \in D$ is given by $d (\alpha) = d_\alpha \alpha$ for all $\alpha \in S$ with the property $d_\alpha \neq d_\beta$ if $\alpha \neq \beta$, then we have  $g d g^{-1} = d' \in D$. Since both $d$ and $d'$ have the same set of eigenvalues, the set of diagonal entries of $d'$ is $\{ d_\alpha : \alpha \in S\}$. Denote by $\sigma \in \Perm(S)$ satisfying  $d'(\sigma(\alpha)) = d_\alpha \sigma(\alpha)$ for each $\alpha \in S$.  Now $d' g (\alpha) = g d(\alpha) = d_\alpha g(\alpha)$ and hence  $g(\alpha)$ is an eigenvector of  $d'$ corresponding to an eigenvalue $d_\alpha$ for each $\alpha \in S$. Since the $d_\alpha$'s are all distinct, we have $g(\alpha) = a_\alpha \sigma(\alpha)$ for some $a_\alpha \neq 0$ for each $\alpha \in S$. This proves that $g = h P_\sigma$ where $h(\sigma(\alpha)) = a_\alpha \sigma(\alpha)$ for each $\alpha \in S$, so $h \in D$ and hence proving equation (\ref{normalizerD}).
%}

Now take any element $g \in G$. The subgroup $g D g^{-1}$ is a maximal torus in $G^0$ and thus there exists $h \in G^0$ with $h g D g^{-1} h^{-1} = D$ or equivalently with $hg \in N_{GL(V)}(D)$. Because of equation (\ref{normalizerD}), the element $hg$ is of the form $hg = d P_\sigma$ with $d \in D$ and $\sigma \in \Perm(S)$. We conclude that $g = h^{-1} d P_\sigma \in G^0  \big( G \cap P(\Perm(S)) \big)$ is of the desired form.
\end{proof}

By applying Proposition \ref{permute} to the normalizer of a connected linear algebraic group $G = G^0$, we get the following theorem.

\begin{theorem}
\label{normalizer}
Let $G$ be a connected linear algebraic group which contains the diagonal matrices $D= D_S$ as a subgroup. The normalizer of $G$ in $\GL(V)$ is equal to $$N_{\GL(V)}(G) = P(\Perm(S,\prec)) \hspace{1mm} G.$$
\end{theorem}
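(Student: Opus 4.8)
The plan is to prove the two inclusions of $N_{\GL(V)}(G) = P(\Perm(S,\prec)) \, G$ separately, writing $N = N_{\GL(V)}(G)$ and letting $\lie$ denote the Lie algebra of $G$ throughout. The inclusion $\supseteq$ is the easy direction: since $G$ normalizes itself it suffices to check $P_\sigma \in N$ for every $\sigma \in \Perm(S,\prec)$. For such a $\sigma$ both $\sigma$ and $\sigma^{-1}$ preserve $\prec$, so the identity $P_\sigma \comp E_{\alpha\beta} \comp P_{\sigma^{-1}} = E_{\sigma(\alpha)\sigma(\beta)}$ shows that conjugation by $P_\sigma$ permutes the generators $E_{\alpha\beta}$ with $\alpha \prec \beta$ among themselves. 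By Theorem \ref{conncomp} these generators, together with the diagonal maps, span $\lie$, so conjugation by $P_\sigma$ preserves $\lie$; as $G$ is connected over a field of characteristic $0$, this forces $P_\sigma G P_\sigma^{-1} = G$, i.e. $P_\sigma \in N$.

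For the reverse inclusion I would first observe that $N$ is again a linear algebraic group containing $D$: the normalizer is Zariski closed, and $D \le G \le N$. Hence Proposition \ref{permute} applies to $N$ and yields $N = \big( N \cap P(\Perm(S)) \big) \, N^0$. It then remains to prove two statements: (i) $N^0 = G$, and (ii) every $P_\sigma \in N$ satisfies $\sigma \in \Perm(S,\prec)$. Statement (ii) is immediate from the conjugation identity above: if $P_\sigma$ normalizes $G$ then conjugation by $P_\sigma$ preserves $\lie$, so $E_{\alpha\beta} \in \lie$ implies $E_{\sigma(\alpha)\sigma(\beta)} \in \lie$, meaning $\sigma$ preserves $\prec$. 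Combining (i) and (ii) with the decomposition gives $N \subseteq P(\Perm(S,\prec)) \, G$, as required.

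The heart of the argument, and the step I expect to be the main obstacle, is (i), the claim $N^0 = G$. This genuinely uses that $G$ contains the whole diagonal torus $D$; for a general connected $G$ the identity component of the normalizer can be strictly larger, as the example $N_{\GL(V)}(\SL(V)) = \GL(V)$ already shows. Since $G \le N^0$ and both are connected in characteristic $0$, it suffices to prove the equality of Lie algebras $\lie_{N^0} = \lie$, where $\lie_{N^0}$ denotes the Lie algebra of $N^0$, and only the inclusion $\lie_{N^0} \subseteq \lie$ needs work. As $N^0$ is connected and contains $D$, Theorem \ref{conncomp} tells me $\lie_{N^0}$ is spanned by the diagonal maps together with the off-diagonal $E_{\alpha\beta}$ it contains; the diagonal part already lies in $\lie$, so I must show each such $E_{\alpha\beta}$ with $\alpha \neq \beta$ lies in $\lie$. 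Because $N^0$ normalizes $G$, differentiating the relation that conjugation by elements of $N^0$ preserves $\lie$ gives $[\lie_{N^0}, \lie] \subseteq \lie$. Applying this with $E_{\alpha\beta} \in \lie_{N^0}$ and the diagonal generator $E_{\beta\beta} \in \lie$, the elementary computation $[E_{\alpha\beta}, E_{\beta\beta}] = E_{\alpha\beta}$ then forces $E_{\alpha\beta} \in \lie$. This bracket identity, available precisely because every $E_{\beta\beta}$ belongs to $\lie$, is exactly what makes the normalizer collapse onto $G$ up to the permutation factor.
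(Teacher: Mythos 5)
Your proof is correct, and its overall architecture is the same as the paper's: apply Proposition \ref{permute} to the normalizer $N = N_{\GL(V)}(G)$ (which is algebraic and contains $D$), and then identify the permutation factor with $P(\Perm(S,\prec))$ via the identity $P_\sigma \comp E_{\alpha\beta} \comp P_{\sigma^{-1}} = E_{\sigma(\alpha)\sigma(\beta)}$ together with Proposition \ref{1dimsubgroup}. The genuine difference is that you isolate and prove the step $N^0 = G$, whereas the paper passes from $N = N^0\,(N \cap P(\Perm(S)))$ directly to $N = P(F)\,G$ without comment, implicitly identifying $N^0$ with $G$. You are right that this identification is where the hypothesis $D \le G$ does real work (your $\SL(V)$ example illustrates the failure without it), and your argument for it is clean: $\lie_{N^0}$ is $\mathrm{Ad}(D)$-stable, hence a sum of weight spaces $\C E_{\alpha\beta}$ plus a diagonal part, and $[\lie_{N^0},\lie]\subseteq\lie$ combined with $[E_{\alpha\beta},E_{\beta\beta}] = E_{\alpha\beta}$ and $E_{\beta\beta}\in\lie$ forces each off-diagonal weight space of $\lie_{N^0}$ into $\lie$, so $\lie_{N^0}=\lie$ and $N^0=G$ in characteristic $0$. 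A second, cosmetic difference: for the inclusion $P(\Perm(S,\prec))\,G \subseteq N$ you argue at the Lie algebra level (conjugation by $P_\sigma$ permutes the weight spaces of $\lie$), while the paper verifies invariance on the explicit group-level generators of $G = \bigl(\prod_{\lambda}\GL(V_\lambda)\bigr)M$ from Theorem \ref{conncomp}, using Lemma \ref{otherdefpi}; both are valid, and yours is marginally shorter at the cost of invoking the connected-group-determined-by-its-Lie-algebra principle twice. In short: same route, but your write-up closes a small gap the paper leaves implicit.
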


\begin{proof}
Since the normalizer of an algebraic group in $\GL(V)$ is itself an algebraic group by \cite[Page 59]{hump81-1}, Proposition \ref{permute} implies that $N_{\GL(V)}(G) = P(F) G$ where $P(F)$ is the subgroup of $P(\Perm(S))$ which normalizes $G$. It remains to check that $P(F) = P(\Perm(S,\prec))$. We first assume that $P_\sigma \in P(F)$ or thus $P_\sigma G P_{\sigma^{-1}} = G$. Note that for every $I_V + E_{\alpha\beta}$, it holds that $$P_\sigma (I_V + E_{\alpha\beta}) P_{\sigma^{-1}} = I_V + E_{\sigma(\alpha)\sigma(\beta)}$$ and thus that $I_V + E_{\alpha \beta} \in G$ if and only if $I_V + E_{\sigma(\alpha)\sigma(\beta)} \in G$. From Proposition \ref{1dimsubgroup} it then follows that $\alpha \prec \beta$ if and only if $\sigma(\alpha) \prec \sigma(\beta)$ or thus that $\sigma \in \Perm(S,\prec)$. 

Conversely,  asume that $\sigma \in \Perm(S,\prec)$, then we will show that $P_\sigma G P_{\sigma^{-1}} = G$ from the explicit form for $G$ in Theorem \ref{conncomp}. First consider the generators of $M$ which are given by $I_V + t E_{\alpha\beta}$ with $\alpha \prec \beta, \beta \nprec \alpha$ and $t \in \C$. In this case $$P_\sigma (I_V + t E_{\alpha\beta}) P_{\sigma^{-1}} =  I_V + t E_{\sigma(\alpha)\sigma(\beta)} \in M,$$ which shows that $P_\sigma M P_{\sigma^{-1}} = M$. On the other hand, for the subgroup $\GL(V_\lambda)$ with $\lambda \in \Lambda$, we get that $P_\sigma \GL(V_\lambda) P_{\sigma^{-1}} = \GL(V_{\bar{\sigma}(\lambda)})$ by Lemma \ref{otherdefpi} and equation (\ref{eq:induced}), where $\overline\sigma \in \Perm(\Lambda, \oprec)$ is the permutation induced  by $\sigma$. Hence the conclusion follows.\end{proof}

In this way we get a description of all linear algebraic groups containing $D$.

\begin{corollary}
	\label{defofpi}
	Let $G$ be a linear algebraic group containing $D=D_S$ and with $G^0$ as the Zariski connected component of the identity. Then $G$ is given by  $$G= P(F) G^0$$ where $F \le \Perm(S,\prec)$ is the finite subgroup given by $F = \{\sigma \in \Perm(S,\prec) \mid P_\sigma \in G\}$.
\end{corollary}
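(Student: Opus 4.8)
The plan is to bootstrap from Proposition~\ref{permute}, which already yields the decomposition $G = \big(G \cap P(\Perm(S))\big)\, G^0$. Thus the statement will follow once I identify the permutation part $G \cap P(\Perm(S))$ with $P(F)$. Writing $\mathcal{F} = \{\sigma \in \Perm(S) \mid P_\sigma \in G\}$, this amounts to proving that $P_\sigma \in G$ forces $\sigma \in \Perm(S,\prec)$; once that is established, $\mathcal{F} = F$ by the definition of $F$, and hence $G \cap P(\Perm(S)) = P(\mathcal{F}) = P(F)$.

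The key step is the claim that every permutation matrix lying in $G$ preserves $\prec$. First I would observe that the diagonal torus $D = D_S$ is connected, so $D \le G^0$; hence $G^0$ is itself a connected linear algebraic group containing $D$, and since $G$ and $G^0$ share the same Lie algebra $\lie$, the relation $\prec$ is the same whether read off from $G$ or from $G^0$. Now, being the identity component, $G^0$ is normal in $G$, so any $P_\sigma \in G$ satisfies $P_\sigma\, G^0\, P_{\sigma^{-1}} = G^0$, i.e.\ $P_\sigma \in N_{\GL(V)}(G^0)$. Conjugation by $P_\sigma$ therefore preserves $\lie$; combined with the identity $P_\sigma \comp E_{\alpha\beta} \comp P_{\sigma^{-1}} = E_{\sigma(\alpha)\sigma(\beta)}$, this shows that $E_{\alpha\beta} \in \lie$ implies $E_{\sigma(\alpha)\sigma(\beta)} \in \lie$, that is, $\alpha \prec \beta$ implies $\sigma(\alpha) \prec \sigma(\beta)$. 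Hence $\sigma \in \Perm(S,\prec)$, as desired. Equivalently, this is precisely the first half of the argument in the proof of Theorem~\ref{normalizer} applied to the connected group $G^0$.

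It remains to package the conclusion. By the claim, $\mathcal{F} = F$, so $G \cap P(\Perm(S)) = P(F)$ and Proposition~\ref{permute} gives $G = P(F)\,G^0$. Finally, $F$ is a subgroup of $\Perm(S,\prec)$: it is $P^{-1}(G)$ intersected with $\Perm(S,\prec)$, closed under composition and inverses because $P$ is a homomorphism, $G$ is a group, and $\Perm(S,\prec)$ is a group; and it is finite since $\Perm(S)$ is finite. I expect the only genuine content to be the claim that a permutation matrix in $G$ must preserve $\prec$, which hinges on the normality of $G^0$ and the conjugation formula for the $E_{\alpha\beta}$; everything else is bookkeeping built on Propositions~\ref{1dimsubgroup} and~\ref{permute} together with Theorem~\ref{normalizer}.
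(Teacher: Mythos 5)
Your proposal is correct and follows essentially the same route as the paper: the paper simply notes that $G$ normalizes $G^0$ and invokes Theorem~\ref{normalizer} to write every $g \in G$ as $P_\sigma h$ with $\sigma \in \Perm(S,\prec)$ and $h \in G^0$, whence $P_\sigma = gh^{-1} \in G$ and $\sigma \in F$. Your version merely unpacks the same normality-plus-conjugation argument (the first half of the proof of Theorem~\ref{normalizer}) at the level of the Lie algebra instead of citing the theorem wholesale, and the bookkeeping identifying $G \cap P(\Perm(S))$ with $P(F)$ is sound.
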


\begin{proof}
	This is an immediate consequence of Theorem \ref{normalizer}. Indeed for if $g \in G$, then $g = p_{\sigma} h$ for some $\sigma \in  \Perm(S,\prec)$ and $h \in G^0$.
\end{proof}

 \begin{remark}
 Theorem \ref{G^0} and Corollary \ref{defofpi} allow us to order the basis $S$, by ordering the coherent classes as $\lambda_1, \ldots, \lambda_k$,  such that the matrix of an element $g \in G$ with respect to $S$ is of the form
\[ \text{\large $P$}
\begin{pmatrix}
  A_{\lambda_1}&\rvline &  A_{12}  &\rvline & A_{13}  &\rvline & \cdots & \rvline &A_{1k}  \\
  
\hline

 \text{\Large0} &\rvline& A_{\lambda_2} & \rvline & A_{23} &\rvline & \cdots & \rvline &A_{2k}\\

\hline

 \text{\Large0} &\rvline&  \text{\Large0}& \rvline &  A_{\lambda_3} &\rvline & \cdots & \rvline &A_{3k}\\

\hline

\vdots &\rvline&\vdots & \rvline &  \vdots &\rvline & \ddots & \rvline &\vdots\\

\hline

 \text{\Large0} &\rvline&  \text{\Large0}& \rvline &   \cdots &\rvline &   \text{\Large0}& \rvline &A_{\lambda_k}\\

   \end{pmatrix},\]

where $\text{\large $P$}$ is a permutation matrix, $A_{\lambda_i} \in \GL(V_{\lambda_i})$ and $A_{ij} = \text{\Large0}$, a zero matrix,  if $\lambda_i \hspace{1mm} \overline{\nprec} \hspace{1mm} \lambda_j$.
\end{remark}

In order to fully understand the group $G$, we need a description of the subgroup $P(F) \cap G^0$. Let $\overline{\Perm(S, \prec)}$ denote the subgroup of $\Perm(\Lambda, \oprec)$ consisting of the permutations $\overline{\sigma}$ induced by $\sigma \in \Perm(S, \prec)$. The intersection of $P \left(\Perm(S,\prec) \right)$ and $G^0$ is described by the following lemma.

\begin{lemma}
	For every $\sigma \in \Perm(S, \prec)$ it holds that $\overline{\sigma} = 1$ if and only if $P_\sigma \in G^0$. 
\end{lemma}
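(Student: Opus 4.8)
The plan is to prove the two implications separately, using the explicit description of $G^0$ provided by Theorem \ref{conncomp}.

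For the easy direction, suppose $\overline{\sigma} = 1$, i.e.\ $\overline{\sigma}(\lambda) = \lambda$ for every $\lambda \in \Lambda$. Then by \eqref{eq:induced} the permutation matrix satisfies $P_\sigma(V_\lambda) = V_{\overline{\sigma}(\lambda)} = V_\lambda$ for every coherent component, so $P_\sigma$ preserves each $V_\lambda$. Its restriction to $V_\lambda$ merely permutes the basis $S_\lambda$ and is therefore an element of $\GL(V_\lambda)$. Since $V = \bigoplus_{\lambda \in \Lambda} V_\lambda$, it follows that $P_\sigma \in \prod_{\lambda \in \Lambda}\GL(V_\lambda) \subseteq G^0$ by Theorem \ref{conncomp}.

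For the converse, the key idea is to exhibit a $G^0$-invariant flag read off from the enumeration $\lambda_1, \ldots, \lambda_k$ of the coherent components. I would set $U_i = \bigoplus_{j=1}^{i} V_{\lambda_j}$, giving $0 = U_0 \subset U_1 \subset \cdots \subset U_k = V$, and first check that each $U_i$ is invariant under $G^0$. By Theorem \ref{conncomp} it suffices to treat the two types of elements. The Levi factor $\prod_{\lambda \in \Lambda}\GL(V_\lambda)$ preserves every $V_\lambda$, hence every $U_i$. The group $M$ is generated by the elements $I_V + t E_{\alpha\beta}$ with $\alpha \prec \beta$ and $\beta \nprec \alpha$; for such a pair, $\alpha$ lies in a coherent component of strictly smaller index than the component of $\beta$ by the chosen enumeration. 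Since $E_{\alpha\beta}$ maps the component of $\beta$ into that of $\alpha$ and annihilates the remaining basis vectors, it carries each $U_i$ into $U_i$, so $I_V + t E_{\alpha\beta}$ preserves the flag as well. Therefore every element of $G^0$ stabilizes the flag $U_1 \subset \cdots \subset U_k$.

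Now assume $P_\sigma \in G^0$, so that $P_\sigma(U_i) = U_i$ for every $i$. Using \eqref{eq:induced} again, $P_\sigma(U_i) = \bigoplus_{j=1}^{i} V_{\overline{\sigma}(\lambda_j)}$, and the equality $P_\sigma(U_i) = U_i$ forces $\{\overline{\sigma}(\lambda_1), \ldots, \overline{\sigma}(\lambda_i)\} = \{\lambda_1, \ldots, \lambda_i\}$ as subsets of $\Lambda$, for every $i$. Running $i = 1, 2, \ldots, k$ through this family of equalities, each step pins down $\overline{\sigma}(\lambda_i) = \lambda_i$ once the previous ones are known, so by induction $\overline{\sigma}$ fixes every $\lambda_i$, that is, $\overline{\sigma} = 1$. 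I expect the conceptual core—that $G^0$ stabilizes the flag $\{U_i\}$—to be short; the part demanding care is purely the bookkeeping of the partial order $\oprec$ and its linear extension, namely verifying that the generators of $M$ lower the component index (so that the \emph{initial} segments $U_i$ rather than final segments are preserved) and that \eqref{eq:induced} is applied in the correct direction. One may note in passing that this lemma is exactly the statement that, among permutation matrices, those representing elements of the Weyl group of $G^0$ with respect to $D_S$ are precisely the ones already lying in $G^0$, but the flag argument above is self-contained and avoids invoking that machinery.
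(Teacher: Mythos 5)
Your proof is correct, and your forward direction is exactly the paper's. For the converse, however, you take a genuinely different route. The paper argues very tersely via the transpose: since $P_\sigma^T = P_{\sigma^{-1}} = P_\sigma^{-1} \in G^0$ and, by Theorem \ref{conncomp}, every element of $G^0$ is block upper triangular with respect to the enumeration $\lambda_1,\ldots,\lambda_k$, the matrix $P_\sigma$ is simultaneously block upper and block lower triangular, hence lies in $\prod_{i=1}^k \GL(V_{\lambda_i})$ and so fixes each $V_{\lambda_i}$. You instead build the $G^0$-invariant flag $U_1 \subset \cdots \subset U_k$ from the linear extension of $\oprec$, check invariance on the Levi factor and on the generators $I_V + tE_{\alpha\beta}$ of the unipotent radical, and then extract $\overline{\sigma}=1$ by induction on the initial segments. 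The two arguments exploit the same triangular structure of $G^0$; yours is longer but more self-contained and coordinate-free (it never needs the transpose identity for permutation matrices, only that $G^0$ is a group), and it makes explicit the bookkeeping --- that $E_{\alpha\beta}$ with $\alpha \prec \beta$, $\beta \nprec \alpha$ strictly lowers the component index --- which the paper leaves implicit in the phrase ``by using Theorem \ref{conncomp}.'' Your closing observation identifying the statement with a Weyl-group fact is accurate and consistent with the paper's description of $N_{\GL(V)}(D)$ in Proposition \ref{permute}, though neither proof needs it.
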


\begin{proof}
First assume that $\sigma \in \Perm(S,\prec)$ such that $\overline{\sigma} = 1$. Then for every $\lambda \in \Lambda$,  we have   $P_\sigma(V_\lambda) =  V_{\overline{\sigma}(\lambda)} = V_\lambda$. In particular, we have that $P_\sigma \in \displaystyle \prod_{i=1}^k \GL(V_{\lambda_i}) \subset G^0$. 
%\cb{ ***** Here we do need the Zariski connected component. }

For the other implication, assume that $P_\sigma \in G^0$. Note that the transpose $P_\sigma^T = P_{\sigma^{-1}}$ and therefore $P_\sigma^T\in G^0$. By using Theorem \ref{conncomp}, we get that $P_\sigma \in \displaystyle \prod_{i=1}^k \GL(V_{\lambda_i})$, which implies that $\overline{\sigma} = 1$.
%
%\cb{********Note that we need to edit the statement of Theorem \ref{conncomp} to make it precise about the orders of $\lambda_i's$. will come back to it later. }
\end{proof}

In other words this lemma states that $$ G^0 \cap P(\Perm(S,\prec)) = P \big(\{ \sigma \in \Perm(S,\prec) \mid \overline{\sigma} = 1 \big).$$ The following is hence an immediate consequence.

\begin{corollary}
	\label{cor:action}
Every linear algebraic group $G$ containing the group of diagonal matrices $D= D_S$ satisfies $$\faktor{G}{G^0} \approx \bar{F}$$ with $\bar{F}$ a finite subgroup of $\overline{\Perm(S, \prec)} \le \Perm(\Lambda, \oprec)$.
\end{corollary}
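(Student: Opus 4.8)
The plan is to combine Corollary \ref{defofpi} with the preceding lemma through the standard isomorphism theorems, so this is essentially a bookkeeping argument rather than anything requiring new ideas. First I would note that the identity component $G^0$ is a normal subgroup of $G$, so that the quotient $\faktor{G}{G^0}$ is a well-defined group. By Corollary \ref{defofpi} we may write $G = P(F) G^0$ with $F = \{\sigma \in \Perm(S,\prec) \mid P_\sigma \in G\}$. Applying the second isomorphism theorem to the subgroup $P(F)$ and the normal subgroup $G^0$ then yields
$$\faktor{G}{G^0} = \faktor{P(F)G^0}{G^0} \approx \faktor{P(F)}{P(F) \cap G^0}.$$

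Next I would pin down the intersection $P(F) \cap G^0$. Since $F \le \Perm(S,\prec)$, we have $P(F) \subseteq P(\Perm(S,\prec))$, so the identity $G^0 \cap P(\Perm(S,\prec)) = P(\{\sigma \in \Perm(S,\prec) \mid \overline{\sigma} = 1\})$ recorded immediately after the lemma specializes to
$$P(F) \cap G^0 = P(\{\sigma \in F \mid \overline{\sigma} = 1\}).$$
Because $P : \Perm(S) \to \GL(V)$ is an injective homomorphism, this identification lets me rewrite the quotient above as $\faktor{F}{K}$, where $K = \{\sigma \in F \mid \overline{\sigma} = 1\}$.

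Finally I would invoke the homomorphism $\sigma \mapsto \overline{\sigma}$ from $\Perm(S,\prec)$ to $\Perm(\Lambda,\oprec)$ noted earlier in the text. Its restriction to $F$ has kernel exactly $K$, so the first isomorphism theorem gives $\faktor{F}{K} \approx \overline{F}$, where $\overline{F} = \{\overline{\sigma} \mid \sigma \in F\}$ is by definition a subgroup of $\overline{\Perm(S,\prec)} \le \Perm(\Lambda,\oprec)$. Since $\Lambda$ is finite, the group $\Perm(\Lambda,\oprec)$ is finite, and hence so is $\overline{F}$, which completes the argument and establishes $\faktor{G}{G^0} \approx \overline{F}$.

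I do not expect any genuine obstacle here, as every ingredient is already in place; the one point that deserves care is the correct identification of $P(F) \cap G^0$, so that composing the two isomorphism-theorem steps produces precisely the image subgroup $\overline{F}$ rather than an accidentally larger or smaller group. Keeping track of the injectivity of $P$ throughout is what guarantees this.
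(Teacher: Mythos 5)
Your argument is correct and rests on exactly the same two ingredients as the paper's proof (the decomposition $G = P(F)G^0$ from Corollary \ref{defofpi} and the lemma identifying $G^0 \cap P(\Perm(S,\prec))$); the paper merely packages the bookkeeping differently, by directly defining the homomorphism $\pi: G \to \overline{\Perm(S,\prec)}$, $P_\sigma h \mapsto \overline{\sigma}$, and checking well-definedness and injectivity of the induced map on $\faktor{G}{G^0}$, whereas you route the same computation through the second and first isomorphism theorems. The only thing your version omits is the explicit construction of $\pi$ on all of $G$, which the paper reuses afterwards, but as an argument for the stated isomorphism it is complete.
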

\begin{proof}
The group $G$ is a subgroup of $P(\Perm(S,\prec)) G^0 = N_{\GL(V)}(G^0)$, so every $g \in G$ is of the form $P_\sigma h$ with $\sigma \in \Perm(S,\prec)$ and $h \in G^0$. Define the map $\pi: G \to \overline{\Perm(S, \prec)}$ which maps the element $g = P_\sigma h$ to $\overline{\sigma}$. This map is well-defined, since if $P_\sigma h = P_\tau h'$ with $\sigma, \tau \in \Perm(S, \prec)$ and $h,h' \in G^0$, then $P_{\sigma^{-1} \tau} = P_{\sigma^{-1}} P_\tau = h h'^{-1} \in G^0$ and thus $\overline{\sigma} = \overline{\tau}$ by the previous lemma. One can check that $\pi$ is a group homomorphism as  $G^0$ is normal in $G$. Therefore we have an induced map $\overline{\pi}: \faktor{G}{G^0} \to \overline{\Perm(S, \prec)}$ which is an injective group homorphism and the corollary now follows immediately.
\end{proof}
Corollary \ref{cor:action} shows that there is a natural action of $G$ on the equivalence classes $\Lambda$. We note that in general $\overline{\Perm(S, \prec)} \le \Perm(\Lambda, \oprec)$ is a strict subgroup as we will show in Example \ref{QuotientGraph2}. 
We denote by $\pi: G \to \Perm(\Lambda, \oprec)$ the natural map defined by the previous corollary. It plays an important role in the next sections for studying the actions of finite groups. Since $\overline{\sigma} = \pi(P_\sigma)$ for every $\sigma \in \Perm(S,\prec)$, we have that $P_\sigma(V_\lambda) = V_{\overline{\sigma}(\lambda)} = V_{\pi(P_\sigma)(\lambda)}$. Lemma \ref{otherdefpi} implies that $$P_\sigma \GL(V_\lambda)  P_{\sigma}^{-1} = \GL(V_{\overline{\sigma}(\lambda)}) =  \GL(V_{\pi(P_\sigma)(\lambda)}) $$ for every $P_\sigma \in G$. This alternative way of looking at the map $\pi$ is how we will use it in the proof of the main result.

Note that the previous results were for linear algebraic groups $G \le \GL(V)$ over the complex numbers defined over $K$, but the main application is the automorphism group $\Aut(\lien_\G^K)$ of a Lie algebra over a subfield $K \subseteq \C$. Hence we are often interested in the subgroup of $K$-rational points $G(K)$ for $K \subseteq \C$. Since $P(F) \subseteq G(\Q)$, it follows immediately that $$G(K) = G^0(K) P(F) = \left( \displaystyle \prod_{i=1}^k\GL (V_{\lambda_i},K) \right) M(K) P(F).$$

\subsection{Full automorphism group of Lie algebras associated to graphs}

 We now apply the previous results to the  automorphism group of a Lie algebra associated to a graph as introduced in \cite{dm05-1}. We refer to Subsection \ref{subsec:graphs} for notations. Let $\G = (S,E)$ be a graph and $\lien_\G^K$ the corresponding Lie algebra over a field $K$ of characteristic $0$. Let  $\Aut(\G)$ denote the group of all graph automorphisms of $\G$. 
 
 We recall two subgroups of  $\Aut(\lien_\G^K)$ introduced as before:
 $$T = \{\varphi \in \Aut(\lien_\G^K) \mid \varphi(V) = V \} \text{ and } U = \{\varphi \in \Aut(\lien_\G^K) \mid \forall x \in \lien_\G^K: \hspace{1mm}  \varphi(x) - x \in W  \},$$ 
 where $W = \text{Span}\{[\alpha, \beta] \mid \alpha \beta \in E\}$. We will apply Corollary \ref{defofpi} to $G = p(T)$ where $p$ is the natural projection map $p: T \to \GL(V) = \GL\left(\faktor{\lien_\G^K}{[\lien_\G^K,\lien_\G^K]}\right)$, but first we describe the elements $P_\sigma \in G$.

\begin{lemma}
For every $\sigma \in \Perm(S)$, the following are equivalent: $$\sigma \in \Aut(\G) \iff P_\sigma \in G = p(T).$$
\end{lemma}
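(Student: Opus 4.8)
The plan is to establish the biconditional $\sigma \in \Aut(\G) \iff P_\sigma \in G = p(T)$ by translating membership in $G$ into a concrete condition on how $P_\sigma$ interacts with the Lie bracket structure encoding the graph. The key observation is that $P_\sigma \in G = p(T)$ means precisely that $P_\sigma$ lifts to an automorphism $\varphi \in T$ of $\lien_\G^K$ with $\varphi(V) = V$ and $p(\varphi) = P_\sigma$, i.e.\ $\varphi$ acts on the generating space $V$ exactly as the permutation matrix $P_\sigma$.

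First I would treat the easier direction $\sigma \in \Aut(\G) \Rightarrow P_\sigma \in G$. Given a graph automorphism $\sigma$, I would define $\varphi$ on $V$ by $\varphi(\alpha) = \sigma(\alpha)$ for $\alpha \in S$ and extend to $W$ by $\varphi(\alpha \wedge \beta) = \sigma(\alpha) \wedge \sigma(\beta)$ for $\alpha\beta \in E$. The point is to verify that $\varphi$ is a well-defined Lie algebra automorphism: since $\sigma$ preserves the edge set, $\alpha\beta \in E \iff \sigma(\alpha)\sigma(\beta) \in E$, so $\varphi$ maps the spanning set $\{\alpha \wedge \beta \mid \alpha\beta \in E\}$ of $W$ bijectively onto itself and hence restricts to an automorphism of $W$. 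Checking that $\varphi$ respects the three defining bracket relations is then routine: relation (1) follows because $[\varphi(\alpha),\varphi(\beta)] = \sigma(\alpha)\wedge\sigma(\beta) = \varphi(\alpha\wedge\beta) = \varphi([\alpha,\beta])$ when $\alpha\beta \in E$, relation (2) follows because $\sigma$ sends non-edges to non-edges, and relation (3) is automatic since $\varphi(W) = W$. Thus $\varphi \in T$ and $p(\varphi) = P_\sigma \in G$.

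For the converse $P_\sigma \in G \Rightarrow \sigma \in \Aut(\G)$, I would take the automorphism $\varphi \in T$ with $p(\varphi) = P_\sigma$, so $\varphi(\alpha) = \sigma(\alpha)$ for every $\alpha \in S$ (the action on $V$ is pinned down by the permutation matrix, since $U$ acts trivially modulo $W$ and elements of $T$ preserve $V$). For any edge $\alpha\beta \in E$ we then compute $[\sigma(\alpha),\sigma(\beta)] = [\varphi(\alpha),\varphi(\beta)] = \varphi([\alpha,\beta]) = \varphi(\alpha\wedge\beta) \neq 0$, because $\varphi$ is an isomorphism and $\alpha\wedge\beta \neq 0$ in $W$; by the defining relation (2) this forces $\sigma(\alpha)\sigma(\beta) \in E$. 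The same argument applied to $\varphi^{-1}$ (equivalently, to $\sigma^{-1}$, which also satisfies $P_{\sigma^{-1}} = P_\sigma^{-1} \in G$) gives the reverse implication on edges, so $\sigma$ preserves adjacency in both directions and is therefore a graph automorphism.

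I do not anticipate a serious obstacle here; the statement is essentially a dictionary between the combinatorial structure of $\G$ and the algebraic structure of $\lien_\G^K$. The one subtlety worth stating carefully is that membership $P_\sigma \in p(T)$ requires the lift $\varphi$ to preserve the subspace $V$ and to be a genuine \emph{Lie algebra} automorphism, not merely a linear bijection; the force of the condition is exactly that $\varphi$ must respect the bracket, and it is this bracket-compatibility that is equivalent to $\sigma$ preserving the edge relation. I would make explicit that the induced action $\varphi\!\restr{}{W}$ on $W$ is determined by $\sigma$ through $\alpha\wedge\beta \mapsto \sigma(\alpha)\wedge\sigma(\beta)$, so that the whole of $\varphi$ is recovered from $\sigma$, ensuring the correspondence $\sigma \leftrightarrow P_\sigma$ is a clean bijection between $\Aut(\G)$ and $\{P_\sigma \mid \sigma \in \Perm(S)\} \cap G$.
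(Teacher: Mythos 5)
Your proposal is correct and follows essentially the same route as the paper: the forward direction extends $P_\sigma$ to a Lie algebra automorphism via $\alpha\wedge\beta \mapsto \sigma(\alpha)\wedge\sigma(\beta)$, and the converse uses $\varphi([\alpha,\beta]) = [\sigma(\alpha),\sigma(\beta)]$ together with the equivalence $\alpha\beta \in E \iff [\alpha,\beta]\neq 0$. The only cosmetic difference is that the paper gets both implications on edges at once from the fact that the isomorphism $\varphi$ sends nonzero elements to nonzero elements, whereas you invoke $\sigma^{-1}$ for the reverse inclusion; both are fine.
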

\begin{proof}
First assume that $\sigma \in \Aut(\G)$ then $P_\sigma: V \to V$ extends to an automorphism of $\lien_{\G}^K$. In particular, $P_\sigma \in p(T) = G$. For the other implication, note that $P_\sigma = p(\varphi)$ for $\varphi \in T \subset \Aut(\lien_\G^K)$, then $$\varphi([\alpha,\beta]) = [P_\sigma(\alpha),P_\sigma(\beta)] = [\sigma(\alpha), \sigma(\beta)].$$ Since we have the equivalence $\alpha\beta \in E \iff [\alpha,\beta] \neq 0$ and $\varphi$ maps non-zero elements on non-zero elements, this implies that $\sigma \in \Aut(\G)$.
\end{proof}

Combining Corrolary \ref{defofpi} with the above lemma, we get the following description of the automorphism group of $\lien_\G^K$.

\begin{corollary}
Let $K$ be any field of characteristic $0$, then the automorphism group $\Aut(\lien_\G^K)$ is equal to $$ \Aut(\lien_\G^K) = U T$$ with $$T \approx P\left(\Aut(\G)\right) \hspace{1mm} G^0(K) = P(\Aut(\G)) \displaystyle \prod_{i=1}^k\GL(V_{\lambda_i},K)M(K),$$ where the isomorphism is given by  the natural projection $p$.

\end{corollary}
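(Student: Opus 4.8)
The plan is to obtain the decomposition by assembling the structural results already established, applied to the concrete group $G=p(T)$. First I would recall that $\Aut(\lien_\G^K)=U\rtimes T$, so in particular $\Aut(\lien_\G^K)=UT$ and it suffices to describe $T$. Since the projection $p\colon T\to\GL(V)$ is injective with image $G=p(T)$, it restricts to a group isomorphism $T\approx G(K)$, where $G$ is regarded as the $K$-linear algebraic group it generates and $G(K)$ is its group of $K$-rational points. Thus the whole problem reduces to unpacking $G(K)$.

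Next I would invoke the general theory of Subsection \ref{relations}. The group $G$ contains the diagonal subgroup $D=D_S$, so the closing formula of that subsection applies and gives
$$G(K)=G^0(K)\,P(F)=\Bigl(\prod_{i=1}^k\GL(V_{\lambda_i},K)\Bigr)M(K)\,P(F),$$
where $F=\{\sigma\in\Perm(S,\prec)\mid P_\sigma\in G\}$ as in Corollary \ref{defofpi}. Combined with $T\approx G(K)$ and $\Aut(\lien_\G^K)=UT$, this already yields everything except the identification $P(F)=P(\Aut(\G))$.

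The one remaining point is to show $F=\Aut(\G)$. The lemma immediately preceding the corollary gives $\sigma\in\Aut(\G)\iff P_\sigma\in G$, so $\{\sigma\in\Perm(S)\mid P_\sigma\in G\}=\Aut(\G)$ as subgroups of $\Perm(S)$. It therefore suffices to check that the constraint $\sigma\in\Perm(S,\prec)$ built into the definition of $F$ is automatic, i.e.\ that every $\sigma$ with $P_\sigma\in G$ already preserves $\prec$. This follows from the group structure of $G$ together with Proposition \ref{1dimsubgroup}: conjugation by $P_\sigma\in G$ sends $I_V+tE_{\alpha\beta}$ to $I_V+tE_{\sigma(\alpha)\sigma(\beta)}$ and preserves membership in $G(K)$, so $\alpha\prec\beta\iff\sigma(\alpha)\prec\sigma(\beta)$ and hence $\sigma\in\Perm(S,\prec)$; this is exactly the computation carried out in the first half of the proof of Theorem \ref{normalizer}. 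Consequently $F=\Aut(\G)$, so $P(F)=P(\Aut(\G))$, and substituting into the displayed formula gives the claimed description of $T$ and therefore of $\Aut(\lien_\G^K)=UT$.

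The proof is thus essentially a transcription of the preceding results, and the only genuinely nontrivial step I anticipate is this reconciliation of the two incarnations of the finite part: the abstract $F$ of Corollary \ref{defofpi} is cut out inside $\Perm(S,\prec)$, whereas the lemma characterizes $P_\sigma\in G$ by the graph-theoretic condition $\sigma\in\Aut(\G)$, and one must make sure these describe the same group of permutation matrices. Once $\Aut(\G)\subseteq\Perm(S,\prec)$ is in hand, all the pieces line up and no further work is needed.
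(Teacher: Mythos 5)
Your proposal is correct and follows essentially the same route as the paper, which simply combines $\Aut(\lien_\G^K)=U\rtimes T$, the isomorphism $T\approx G(K)$ via $p$, Corollary \ref{defofpi}, and the preceding lemma identifying $\{\sigma\mid P_\sigma\in G\}$ with $\Aut(\G)$. The only difference is that you spell out the reconciliation $F=\Aut(\G)$ (checking that $P_\sigma\in G$ forces $\sigma\in\Perm(S,\prec)$), a detail the paper leaves implicit; your argument for it is the right one.
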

\noindent So the automorphism group of $\lien_\G^K$ over any field $K$ of characteristic $0$ is completely described in terms of the graph $\G$. Note that the Levi subgroup $L$ of $G$ is also a Levi subgroup of $\Aut(\lien_\G^K)$ because $U$ is a unipotent normal subgroup.

\smallskip

Consider the morphism $\pi: G  \to \Perm(\Lambda, \oprec)$ as introduced under Corollary \ref{defofpi}. Lemma \ref{otherdefpi} shows that $g (V_\lambda) = V_{\pi(g)(\lambda)}$ for every $g \in P(\Perm(S,\prec))$. More general, this relation holds for every element in the Levi subgroup $L = \displaystyle \prod_{i=1}^k\GL(K,V_{\lambda_i}) P(\Aut(\G))$ by Lemma \ref{otherdefpi}, leading to an action $L \curvearrowright \Lambda$ which is central in the proof of the main result. In order to work with this action, we recall how we can find the relation $\prec$ directly from the graph $\G$.

\noindent \textbf{Coherent components for graphs.} Let $\G = (S, E)$ denote a finite simple graph. We will now recall some observations from \cite{dm05-1} interpreting the relation $\prec$ on the set of vertices $S$. We will also introduce some  terminology which would be useful in our applications later in the paper. 
 For $\alpha \in S$, we define the {\em open neighborhood}  and  {\em closed neighborhood}   of $\alpha$, respectively, as follows:
\[ \Omega_{\G}'(\alpha) = \{ \beta \in S \,\,| \,\,\alpha \beta \in E\}\,\,\,\text{ and } \,\,\Omega_{\G}(\alpha) =  \Omega_{\G}'(\alpha) \cup \{\alpha\}.\]

For $\alpha, \beta \in S$, we have $\alpha \prec \beta$ if and only if $\Omega_{\G}'(\alpha) \subseteq \Omega_{\G}(\beta)$ by \cite[Proposition 4.1]{dm05-1}. Consequently, $\alpha \sim \beta$ if and only if  $\Omega_{\G}'(\alpha) \subseteq \Omega_{\G}(\beta)$ and $\Omega_{\G}'(\beta) \subseteq \Omega_{\G}(\alpha)$.  As before, we denote the set of coherent components (equivalence classes) by $\Lambda$. It can be checked that the induced subgraph on each coherent component $\lambda \in \Lambda$ is either complete or discrete. More precisely, for every $\lambda \in \Lambda$, either $\alpha \beta \in E$ for all distinct $\alpha, \beta \in \lambda$ or  $\alpha \beta \notin E$ for any  $\alpha, \beta \in \lambda$.   Moreover, for $\lambda \neq \mu \in \Lambda$, if there exist $\alpha \in \lambda, \beta \in \mu$ with $\alpha \beta \in E$, then   we have  $\gamma\delta \in E$ for all $\gamma \in \lambda, \delta \in \mu$.  These properties lead to a notion of {\em quotient graph} $\overline{\G}$ whose vertex set is $\Lambda$ and the edge set $\mathcal{E}$ is given by \[\mathcal{E} = \{ \lambda \mu \mid  \text{ there exist } \alpha \in \lambda, \beta \in \mu  \text{ with } \alpha \beta \in E \}.\]
Note that $\overline{\G}$ might have vertices with loops. Indeed, if a subgraph of $\G$ induced on $\lambda$ is complete and $\lambda$ contains at least two vertices, then $\lambda \lambda \in \mathcal{E}$.

The induced partial order $\oprec$ on  $\Lambda$  is given by 
$$\lambda \hspace{1mm} \oprec\hspace{1mm} \mu\,\,\,\, \iff\,\,\,\,    \Omega'_{\G}(\alpha) \subseteq \Omega_{\G}(\beta) \text{ for some } \alpha \in \lambda, \beta \in \mu. $$ %where the neighborhoods are taken in the quotient graph $\overline{\G}$.
%\cb{************ The condition I wrote previously in terms of the quotient graph turned out to be WRONG. The example is corrected now. Please check all examples carefully. I have done mistakes almost 10 times so far. So Please check them carefully. 
%Just for a reference this is what I was claiming (wrong)   in \ref{QuotientGraph1}  $$\lambda \hspace{1mm} \oprec\hspace{1mm} \mu\,\,\,\, \iff\,\,\,\,    \Omega'_{\overline{\G}}(\lambda) \subset \Omega_{\overline{\G}}(\mu).$$ where the neighborhoods are taken in the quotient graph $\overline{\G}$.
% } 
We arrange the  coherent components  $\lambda'$s as $\lambda_1, \ldots, \lambda_k$ such that  if $\lambda_i \hspace{1mm} \oprec \hspace{1mm} \lambda_j$, then $i \leq j$. 

In general, we have the inclusion of groups $\Aut(\G) \subset \Perm(S, \prec)$ and $\overline{\Perm(S,\prec)} \subset \Perm(\Lambda, \oprec)$. We give some examples showing that these can be equalities or strict inclusions depending on the graph $\G$.

\begin{example}\label{QuotientGraph1}

Consider the graph $\G$ with vertices $\{\alpha_1, \alpha_2, \beta_1, \beta_2, \beta_3, \gamma_1, \gamma_2 \}$ as follows.

\[
\begin{tikzpicture}[font=\small,baseline=-4]
\node[vertex,label={left:$\alpha_1$}] (1) at (0,0) {};
\node[vertex,label={left:$\alpha_2$}] (2) at (0,1) {};
%\node[vertex,label={above:$\alpha_3$}] (3) at (0,1.5) {};

\node[vertex,label={below:$\gamma_1$}] (3) at (1, 0) {};
\node[vertex,label={above:$\gamma_2$}] (4) at (1,1) {};
%\node[vertex,label={below:$\beta_3$}] (6) at (4,-.25) {};

\node[vertex,label={below:$\beta_1$}] (5) at (2.5,-.25) {};
\node[vertex,label={right:$\beta_2$}] (6) at (2.5,0.5) {};
\node[vertex,label={above:$\beta_3$}] (7) at (2.5,1.25) {};

\path[-]
% (1) edge (2)
 %(1) edge [bend right] node[left] {} (3)
 (1) edge (3)
 (1) edge (4)

 (2) edge (3)
 (2) edge (4)
  
 %(3) edge (4)
 (3) edge (5)
 (3) edge (6)
 (3) edge (7)
 
 (4) edge (5)
 (4) edge (6)
(4) edge (7)

 (5) edge (6)
(5) edge [bend left]   node[left] {}  (7)

 (6) edge (7);
\end{tikzpicture}
\]

Looking at the neighborhoods of the vertices, we get 3 coherent components, say $\lambda, \nu$ and $\mu$, where $\lambda = \{\alpha_1, \alpha_2\}, \nu= \{\gamma_1, \gamma_2\}$ and $\mu = \{ \beta_1, \beta_2, \beta_3\}$.
The corresponding quotient graph $\overline{\G}$ is the graph
\[ 
\begin{tikzpicture}[font=\small,baseline=-4]
\node[vertex,label={below:$\lambda$}] (1) at (0,0) {};

\node[vertex,label={below:$\nu$}] (3) at (1, 0) {};

\node[vertex,label={below:$\mu$}] (5) at (2,0) {};
\path[-]
 (1) edge (3)
 (3) edge (5)
 %(3) edge [loopup] (3)
(5) edge [loopup] (5)
;
\end{tikzpicture}
\]

%The neighborhoods $\Omega'_{\overline{\G}}$ are given by \[ \Omega'_{\overline{\G}} (\lambda) = \{\lambda, \nu\},  \quad \Omega'_{\overline{\G}} (\nu) = \{\lambda, \mu\}, \quad \Omega'_{\overline{\G}} (\mu) = \{\mu, \nu\}. \]
Since $\Omega'(\alpha_1)  \subset \Omega(\beta_1)$, we can see that  $\lambda \, \oprec \, \mu$.   The other pairs of the coherent classes are not comparable with respect to the relation $\oprec$. Hence we can arrange the coherent classes as $\lambda_1 = \lambda, \lambda_2 = \mu$ and $\lambda_3 = \nu$ so that $i \leq j$ whenever $\lambda_i \hspace{1mm} \oprec \hspace{1mm} \lambda_j$.  We could also arrange them in the order $\nu,\lambda,  \mu$ or $\lambda, \nu, \mu$.

In this case,    $\lambda \hspace{1mm} \oprec \hspace{1mm} \mu$ and no other pairs of coherent classes are comparable, hence the group $\Perm(\Lambda, \oprec)$ is trivial. The group $\Perm(S, \prec)$ consists of permutations $\sigma$ of $S$ satisfying $\sigma(\lambda_i) = \lambda_i$ for all $i \in \{ 1, 2, 3\}$. Thus $\Perm(S, \prec) = \Aut(\G)$ in this case.

\end{example}

\begin{example}\label{QuotientGraph2}
	Consider the graph $\G$ with a vertex set the same as in the Example \ref{QuotientGraph1} and the edge set as drawn below.% in Figure \ref{fig:notsurjective}.
	
	\[
\begin{tikzpicture}[font=\small,baseline=-4]
\node[vertex,label={left:$\alpha_1$}] (1) at (0,0) {};
\node[vertex,label={left:$\alpha_2$}] (2) at (0,1) {};
%\node[vertex,label={above:$\alpha_3$}] (3) at (0,1.5) {};

\node[vertex,label={below:$\gamma_1$}] (3) at (1, 0) {};
\node[vertex,label={above:$\gamma_2$}] (4) at (1,1) {};
%\node[vertex,label={below:$\beta_3$}] (6) at (4,-.25) {};

\node[vertex,label={below:$\beta_1$}] (5) at (2.5,-.25) {};
\node[vertex,label={right:$\beta_2$}] (6) at (2.5,.5) {};
\node[vertex,label={above:$\beta_3$}] (7) at (2.5,1.25) {};

\path[-]
(1) edge (2)
 %(1) edge [bend right] node[left] {} (3)
 (1) edge (3)
 (1) edge (4)
 (2) edge (3)
 (2) edge (4)
  (3) edge (4)
   (3) edge (5)
 (3) edge (6)
 (3) edge (7)
 (4) edge (5)
 (4) edge (6)
 (4) edge (7)
 (5) edge (6)
 %(5) edge (7)
 (6) edge (7)
 (5) edge [bend left]   node[left] {}  (7)

 ;

 \end{tikzpicture}
\]

There are three coherent classes in this graph, say $\lambda_1 = \{\alpha_1, \alpha_2\}, \lambda_2 = \{ \beta_1, \beta_2, \beta_3\}$ and $\lambda_3 = \{\gamma_1, \gamma_2\}$, and the quotient graph is as drawn below.

\[ 
\begin{tikzpicture}[font=\small,baseline=-4]
\node[vertex,label={below:$\lambda_1$}] (1) at (0,0) {};

\node[vertex,label={below:$\lambda_3$}] (3) at (1, 0) {};

\node[vertex,label={below:$\lambda_2$}] (5) at (2,0) {};
\path[-]
(1) edge [loopup] (1)

 (1) edge (3)
 
(3) edge  [loopup] (3)
 (3) edge (5)
 %(3) edge [loopup] (3)
(5) edge [loopup] (5)
;
\end{tikzpicture}
\]

The coherent classes satisfy $\lambda_1\, \oprec \,\lambda_3, \lambda_2\, \oprec \,\lambda_3.$
We note that $\lambda_1$ and $\lambda_2$ are not comparable.  Hence if $\lambda_i \hspace{1mm} \oprec \hspace{1mm} \lambda_j$, then $i \leq j$.

Now we will show that $\overline{\Perm(S, \prec)}$ is a proper subgroup of $\Perm(\Lambda, \oprec)$ in this case.  Consider a $2$-cycle $\tau = (\lambda_1 \,\, \lambda_2) \in \Perm(\Lambda)$. Then $\tau(\lambda_1) = \lambda_2 \, \oprec \, \lambda_3 = \tau(\lambda_3)$ and $\tau(\lambda_2) = \lambda_1 \, \oprec \, \lambda_3 = \tau(\lambda_3)$. This shows that $\tau \in \Perm(\Lambda, \oprec)$.  Now for every element  $\sigma \in \Perm(S)$, we have $\sigma(\lambda_1) \neq \lambda_2$ as $\# \lambda_1 \neq \# \lambda_2$. Hence $\tau$ is not induced by any element of $\Perm(S, \prec)$.   In particular it follows that  not every element of $\Perm(\Lambda,\oprec)$ is induced by a graph automorphism. In fact, as in Example \ref{QuotientGraph1}, we can show that $\Perm(S, \lambda) = \Aut(\G)$ by observing that every permutation in $\Perm(S, \prec)$ stabilizes each coherent component.
\end{example}

%\cb{********* Q: Should we include a proof/statement if $\tau \in \Perm(\Lambda, \oprec)$ satify $\# \lambda = \#\tau(\lambda)$, then $\tau \in \overline{\Perm(S, \prec)}$? Do we use it anywhere? I think we use it in the examples at the end of the paper?}
%
%\cb{*********** I wanted to give an example of a graph where $\Aut(\G)$ is a proper subgroup of $\Perm(S, \prec)$ and could not find it for some time. I thought I had an example and I found out on July 30 that it's WRONG. I have `commented'' on tex file but it's wrong anyways.  I spent a lot of time on finding an example. I started believing   that $\Aut(\G) =  \Perm(S, \prec)$. Sorry I am probably making the things more complicated. Now I hope the following example is correct. Please check. It's an extreme case though, more precisely, $\Perm(S, \prec) = \Perm(S)$.}

\begin{example}\label{QuotientGraph3}
	Consider a graph $\G$  as drawn below.% in Figure \ref{fig:notsurjective}.
	
	\[
\begin{tikzpicture}[font=\small,baseline=-4]
\node[vertex,label={above:$\alpha$}] (1) at (0,.85) {};
\node[vertex,label={right:$\beta$}] (2) at (.75,0.25) {};
\node[vertex,label={left:$\eta$}] (5) at (-.75, 0.25) {};
\node[vertex,label={right:$\gamma$}] (3) at (0.5, -.75) {};
\node[vertex,label={left:$\delta$}] (4) at (-.5, -.75) {};

\path[-]
(1) edge (2)
 %(1) edge [bend right] node[left] {} (3)
 (2) edge (3)
 (3) edge (4)
 (4) edge (5)
 (5) edge (1)
   ;
\end{tikzpicture}
\]

There are five coherent classes and no two vertices are comparable under the relation $\prec$. Therefore $\Perm(S, \prec) = \Perm(S)$ in this case. In particular, $\Aut(\G)$ is a proper subgroup of $\Perm(S, \prec)$. 
  \end{example}

%For the applications, we are usually dealing with the $K$-rational points of a linear algebraic group defined over a field $K \subseteq \C$, for example $\Aut(\lien_\G^K) = \Aut(\lien_\G^\C)(K)$. Since for every field $K \subseteq \C$ we have that $P(\Perm(S)) \le \GL(V)(\Q) \le \GL(V)(K)$, the structure of $G(K)$ easily follows from the structure of $G^0(K)$ and $\Perm(S,\prec)$, namely
%
%$$G(K) = P(\Perm(S)) G^0(K).$$

%\section{Anosov automorphisms commuting with a finite group}
%
%\section{Examples}
%
%\subsection{Infra-nilmanifolds modeled on a Lie group of type}

\section{Anosov automorphisms commuting with finite subgroups}
\label{sec:mainresult}

In this part we focus on the Lie groups corresponding to the Lie algebras of the previous section. For $K = \mathbb  R$,  let $N_\mathcal{G}$ be the connected and simply connected nilpotent Lie group with $ \lien_\mathcal{G}^{\R}$ as its Lie algebra. We call {\em $N_\mathcal{G}$  a 2-step nilpotent Lie group associated to the graph $\mathcal{G}$}. The rational Lie algebra $\lien_\G^\Q$ corresponds to a lattice $\Gamma \le N_\G$, which is uniquely defined up to commensurability and the nilmanifold $\faktor{N_\G}{\Gamma}$ is said to be associated to the graph $\G$. In the paper \cite{dm05-1} the authors give an explicit form for such a lattice $\Gamma$, but this is not necessary for our purposes. Note that there are in general other nilmanifolds modeled on a Lie group associated to a graph, see Section \ref{sec:openQ}. In this section, we study which infra-nilmanifolds covered by a nilmanifold associated to a graph admit an Anosov diffeormorphism.

\begin{definition}
	\label{def-associated}
Let $\G$ be a graph and $N_\G$ the Lie group associated to $\G$. We say that an infra-nilmanifold $N_\G / \Gamma$ is \emph{associated to the graph $\G$} if the Lie algebra corresponding to the radicable hull $M^\Q$ of $M = N \cap \Gamma$ is isomorphic to $\lien_\G^\Q$. 
\end{definition}
\noindent By \cite{main15-1} the graph $\G$ is uniquely determined by the rational Lie algebra $\lien^\Q_\G$.

Theorem \ref{charkk} states that for studying Anosov diffeomorphisms on infra-nilmanifolds associated to graphs, we have to study Anosov automorphisms commuting with finite subgroups $H$ of the automorphism group $\Aut(\lien_\G^\Q)$. We start by constructing induced representations on the coherent components before getting to the proof of the main result.

\paragraph{\textbf{Induced representations on coherent components}} 
 Let $$L = P(\Aut(\G)) \displaystyle \prod_{i=1}^k \GL(V_{\lambda_i},\Q)$$ be the maximal reductive subgroup of $G(\Q)$ as above. We know that there is an algebraic isomorphism between $T$ and $G$, so we can also consider $L \le T \le \Aut(\lien_\G^\Q)$ as a Levi subgroup of $\Aut(\lien_\G^\Q)$.

 If $H$ is a finite subgroup of $\Aut(\lien_\G^\Q)$, then it is reductive and thus lies in a maximal reductive subgroup $L^\prime$. Since all maximal reductive subgroups of a linear algebraic group are conjugate in characteristic $0$, there exists $g \in G(\Q)$ such that $g L^\prime g^{-1} = L$ or thus $g H g^{-1} \subseteq L$. Without loss of generality, we can assume that $H$ is a finite subgroup of $L$. The same argument shows that if there exists an Anosov automorphism on $\lien_\G^\Q$ commuting with $H$, we can assume it lies in $L$ as well. Indeed, if $\varphi: \lien^\Q_\G \to \lien^\Q_\G$ is an Anosov automorphism, then also its semisimple part $\varphi_s$ is Anosov and commutes with $H$. Now by considering the reductive subgroup generated by $H$ and $\varphi_s$, we can assume that both lie in $L$. 
  
 Even stronger, if we consider the natural projection map $\psi: \Aut(\n_\G^\Q) \to L$ by taking the quotient by the unipotent radical, then we get that $\psi(H)$ is a finite subgroup of $L$. When we consider automorphisms as matrices in the standard basis on $\n_\G^\Q$, the map $\psi$ is given by taking the block diagonal part of a matrix. If $\varphi \in \Aut(\n_\G^\Q)$ is an Anosov automorphism, then $\psi(\varphi)$ will again be an Anosov automorphism lying in $L$ with the same eigenvalues. Moreover, if $H$ and $\varphi$ commute, also $\psi(H)$ and $\psi(\varphi)$ commute. So without loss of generality, we can take the projection $\psi$ in order to assume that both $H$ and $\varphi$ are elements of the Levi subgroup $L$. Even if we assume later that $H$ is a subgroup of $L$, we can state our main theorem for general representation $\rho: H \to \Aut(\lien_\G^\Q)$, where we first apply the map $\psi$ and get an Anosov automorphism for the original representation as explained above.

So from now on we assume that $H$ is a subgroup in $L$. By restricting the action at the end of the previous section, we get a map $\pi: H \to \Perm(\Lambda, \oprec)$ such that for every $h \in H$, we have  $$h (V_\lambda) = V_{\pi(h)(\lambda)}.$$ This gives an action of $H$ on the coherent components $\Lambda$, which we denote by $h\cdot \lambda = \pi(h)(\lambda)$. For this action of $H$ on $\Lambda$, we denote the  orbits by $\kappa_i = H \cdot \lambda_i$ with $1 \leq i \leq k$. 
%Since every map in $\pi(F)$ is also induced by an automorphism of the graph, we know that if $\lambda, \mu$ lie in the same orbit, they both correspond to a complete graph or a discrete graph. (Note that this is not true for general elements of $\Perm(\Lambda, \prec)$ by a similar example as Example \ref{ex:graph}.) 
Write $c(\lambda_i) = 1$ if the subgraph of the quotient graph $\overline{\G}$ (see Section \ref{sec:fullauto}) induced on the orbit $\kappa_i$ is an edgeless graph, otherwise $c(\lambda_i) = 2$. In other words, we look at the union of all coherent components in the orbit $\kappa_i$ which would give us a subset of the vertex set $S$ of $\G$. If there are no two vertices in that union which are adjacent in $\G$, then we write $c(\lambda_i) = 1$, otherwise, we write $c(\lambda_i) =2$. In conclusion, $c(\lambda_i) =2$ if and only if there is an edge (possibly a loop)  in the subgraph of $\overline{\G}$ induced by the orbit $\kappa_i$. 
Note that $c(\lambda_i)$ is equal to the nilpotency class of the subalgebra generated by the elements in the coherent components of the orbit $\kappa_i$.

%Write $c(\lambda_i) = 1$ if the subgraph of $\overline{\G}$ induced on the orbit $\kappa_i$ is an edgeless  graph, otherwise write $c(\lambda_i) = 2$, i.e. if there are two distinct vertices in $\kappa_i$ which are adjacent in $\overline{\G}$. Note that $c(\lambda_i)$ is equal to the nilpotency class of the subalgebra generated by the elements in the coherent components of the orbit $\kappa_i$ \cb{*********If we are in a situation graph on $\kappa$ is discrete but each coherent component itself is complete, then $c = 1$ but all the vertices in that orbit will generate nonabelian subalgebra. - Jonas: Maybe the sentence is not clear, but in that case c=2.} 
% By $V_{\kappa_i}$ we denote the vector space spanned by the elements $\alpha \in \lambda$ with $\lambda \in \kappa_i$.  

Every orbit $\kappa_i = H \cdot \lambda_i$ also defines a subgroup $H_i \leq H$ given by the stabilizer of $\lambda_i$, i.e. $$H_i = \{h \in H \mid h(V_{\lambda_i}) = V_{\lambda_i}\}.$$ So the action of $H$ on $\Lambda$ determines $k$ representations $\rho_i: H_i \to \GL(V_{\lambda_i})$. These representations depend on the choice of the elements $\lambda_i \in \kappa_i$, but this does not influence our results below.

%\cb{*******Please check this example}

\begin{example}\label{clambda}
Consider a graph $\G$ with its quotient graph $\overline{\G}$ drawn as below.

	\[
\begin{tikzpicture}[font=\small,baseline=-4]
\node[vertex,label={below:$\alpha_1$}] (1) at (0,0) {};
\node[vertex,label={above:$\alpha_2$}] (2) at (0,1) {};

\node[vertex,label={below:$\gamma_1$}] (3) at (1,0) {};
\node[vertex,label={above:$\gamma_2$}] (4) at (1,1) {};

\node[vertex,label={below:$\delta_1$}] (5) at (2, 0) {};
\node[vertex,label={above:$\delta_2$}] (6) at (2,1) {};

\node[vertex,label={below:$\beta_1$}] (7) at (3, 0) {};
\node[vertex,label={above:$\beta_2$}] (8) at (3,1) {};

\path[-]
%(1) edge (2)
 %(1) edge [bend right] node[left] {} (3)
 (1) edge (3)
 (1) edge (4)
 (2) edge (3)
 (2) edge (4)
  (3) edge (5)
   (3) edge (6)
 (4) edge (5)
 (4) edge (6)
 (4) edge (6)
  (5) edge (7)
 (5) edge (8)
 (6) edge (7)
 (6) edge (8)
 ;
\end{tikzpicture}
\hspace{5em}
\begin{tikzpicture}[font=\small,baseline=-4]
\node[vertex,label={below:$\lambda_1$}] (1) at (0,0) {};
\node[vertex,label={below:$\lambda_3$}] (2) at (1,0) {};
\node[vertex,label={below:$\lambda_4$}] (3) at (2,0) {};
\node[vertex,label={below:$\lambda_2$}] (4) at (3,0) {};

\path[-]
%(1) edge (2)
 %(1) edge [bend right] node[left] {} (3)
 (1) edge (2)
 (2) edge (3)
 (3) edge (4)
  ;
\end{tikzpicture}
\]

Here the coherent classes are $\lambda_1 = \{ \alpha_1, \alpha_2\}, \lambda_2 = \{ \beta_1, \beta_2\}, \lambda_3 = \{ \gamma_1, \gamma_2\}$ and $\lambda_4 = \{ \delta_1, \delta_2\}$.
Suppose $H$ is a subgroup of $L$ generated by $P_\sigma$ where $\displaystyle{\sigma = \prod_{i=1}^2 (\alpha_i \, \beta_i) (\gamma_i \, \delta_i)}$, which is hence of order $2$. Then the action of $H$ on $\Lambda$ is given by \[ P_\sigma \cdot \lambda_1 = \lambda_2, \,\,P_\sigma \cdot \lambda_2 = \lambda_1,\, \,\,P_\sigma \cdot \lambda_3 = \lambda_4, \,\,P_\sigma \cdot \lambda_4 = \lambda_3.\] Hence $\kappa_1 =\{\lambda_1, \lambda_2\} = \kappa_2$ and $\kappa_3 =\{\lambda_3, \lambda_4\} = \kappa_4$. Consequently $c(\lambda_1) = 1 = c(\lambda_2)$ and $c(\lambda_3) = 2 = c(\lambda_4)$.  The stabilizer $H_i$ is trivial for each $i$ in this example.

\end{example}

 \begin{remark} \label{eigenvalues}
 	Although $L$ is isomorphic to a subgroup of $\Aut(\lien^\Q_\G)$, this isomorphism does not preserve the spectrum of an element. Take any $g \in G \le \GL(V)$ which uniquely corresponds to an automorphism $\varphi \in T \le \Aut(\lien_\G)$. Write $n_\lambda$ for the dimension of the subspace $V_\lambda$ with $\lambda \in \Lambda$. If $g \in L \subseteq G$ and $\mu_{\lambda,1}, \ldots, \mu_{\lambda, n_\lambda}$ are the eigenvalues of $g$ on $V_\lambda$, then the eigenvalues of $\varphi$ are equal to $\mu_{\lambda,i}$ for $\lambda \in \Lambda$ or $\mu_{\lambda,i} \mu_{\lambda^\prime,j}$ for $\lambda, \lambda^\prime \in \Lambda$. The latter eigenvalues only occur if either $\lambda \neq \lambda^\prime$ and these coherent components are connected by an edge in the quotient graph or if $\lambda = \lambda^\prime, i \neq j$ and the coherent component $\lambda$ is a complete graph. 
\end{remark}

\paragraph{\textbf{Main result}}

Recall that a matrix $A \in \GL(n,\Q)$ with eigenvalues $\mu_1, \ldots, \mu_n$ is called $c$-hyperbolic if for all $1 \leq l \leq c$ and $1 \leq i_j \leq n$  the product $$ \displaystyle \prod_{j=1}^l \vert \mu_{i_j} \vert \neq 1.$$ So $1$-hyperbolic corresponds to the classical notion of hyperbolic matrices. A linear automorphism with a $c$-hyperbolic matrix is called {\em $c$-hyperbolic automorphism.} The action of $H$ on the coherent components, the induced representations $\rho_i$ together with $c(\lambda_i)$ contain all the information about the existence of Anosov diffeomorphisms.

\begin{theorem}
Let $\G$ be a graph and $\lien^\Q_\G$ be the rational Lie algebra associated to this graph. Let $\rho: H \to L \le \Aut(\lien^\Q_\G)$ be a representation of a finite group and $\rho_i: H_i \to \GL(V_{\lambda_i})$ be the representations as introduced above. The following statements are equivalent.

\begin{center}
There exists an Anosov automorphism $\varphi \in \Aut(\lien^\Q_\G)$ commuting with every element of $\rho(H)$. \\
	$\Updownarrow$ \\
For every $1 \leq i \leq k$, there exists a $c(\lambda_i)$-hyperbolic integer-like automorphism $\varphi_i  \in GL(V_{\lambda_i})$ \\ such that $\varphi_i$ commutes with every element of $\rho_i(H_i)$.
\end{center}
\end{theorem}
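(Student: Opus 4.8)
The plan is to prove the two directions separately, and the main work is to understand how the eigenvalues and the commuting condition decompose across the orbit structure described by the map $\pi: H \to \Perm(\Lambda, \oprec)$. Throughout, I would use that $H \le L = P(\Aut(\G)) \prod_{i=1}^k \GL(V_{\lambda_i},\Q)$ and that by the reduction argument preceding the statement, both $H$ and the candidate Anosov automorphism $\varphi$ may be assumed to lie in the Levi subgroup $L$. The key structural fact from Remark \ref{eigenvalues} is that the eigenvalues of an automorphism $\varphi \in L$ on all of $\lien^\Q_\G$ are either the eigenvalues of $\varphi$ acting on the various $V_\lambda$, or products of two such eigenvalues coming from coherent components joined by an edge (or from a single complete component). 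This is precisely why the number $c(\lambda_i) \in \{1,2\}$ enters: hyperbolicity on $\lien^\Q_\G$ forces $c(\lambda_i)$-hyperbolicity on each orbit.

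\medskip

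\emph{The backward direction} ($\Leftarrow$) is the constructive one. Given, for each orbit representative $\lambda_i$, a $c(\lambda_i)$-hyperbolic integer-like $\varphi_i \in \GL(V_{\lambda_i})$ commuting with $\rho_i(H_i)$, I would assemble a single automorphism $\varphi$ on $V = \bigoplus_\lambda V_\lambda$ as follows. On the orbit $\kappa_i = H \cdot \lambda_i$, I would transport $\varphi_i$ around the orbit using the $H$-action: for each $\mu = h \cdot \lambda_i$ in the orbit, define $\varphi$ on $V_\mu$ by conjugating $\varphi_i$ by (the restriction of) $h$. For this to be well-defined I must check independence of the choice of $h$, which is exactly where the commutation of $\varphi_i$ with the stabilizer $\rho_i(H_i)$ is used: two choices differ by an element of $H_i$, and commutation makes the two transported maps agree. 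One then verifies that the resulting $\varphi$ commutes with all of $H$ by construction, is integer-like (the characteristic polynomial on each orbit is a power of that of $\varphi_i$, hence still integral with constant term $\pm 1$), and is hyperbolic on $\lien^\Q_\G$: a product of at most two eigenvalues on $\lien^\Q_\G$ pulls back to a product of at most $c(\lambda_i)$ eigenvalues of some $\varphi_i$ (the factor $2$ only appears when the orbit carries an edge, i.e.\ when $c(\lambda_i)=2$), which has absolute value $\neq 1$ by $c(\lambda_i)$-hyperbolicity. Finally, since $\varphi$ is built to lie in $L$ and thus in $\Aut(\lien^\Q_\G)$, it is the desired Anosov automorphism.

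\medskip

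\emph{The forward direction} ($\Rightarrow$) runs in reverse. Given an Anosov $\varphi$ commuting with $\rho(H)$, I reduce to $\varphi \in L$ as explained in the excerpt (passing to the semisimple part and then projecting by $\psi$ preserves hyperbolicity, integer-likeness, and the commutation). Since $\varphi$ commutes with $H$, and since each $h \in H$ permutes the $V_\lambda$ via $\pi(h)$, the block of $\varphi$ on $V_{\lambda_i}$ is conjugate to its block on any $V_\mu$ in the same orbit; in particular $\varphi$ restricted to $V_{\lambda_i}$ commutes with the stabilizer action $\rho_i(H_i)$. Taking $\varphi_i = \restr{\varphi}{V_{\lambda_i}}$, integer-likeness is inherited because the eigenvalues on one orbit are repeated (with equal multiplicity) across the orbit, so the orbit characteristic polynomial is a power of $\det(xI - \varphi_i)$ and rationality forces the latter to have integer coefficients and unit constant term. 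The $c(\lambda_i)$-hyperbolicity of $\varphi_i$ follows from reading Remark \ref{eigenvalues} backwards: if some product of $c(\lambda_i)$ eigenvalues of $\varphi_i$ had absolute value $1$, then when $c(\lambda_i)=2$ there is an edge in the orbit realizing that product as an eigenvalue of $\varphi$ on $\lien^\Q_\G$, contradicting hyperbolicity, while the $c(\lambda_i)=1$ case is immediate.

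\medskip

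\emph{The main obstacle} I anticipate is the bookkeeping in the backward direction: verifying that the orbit-transport of $\varphi_i$ is genuinely well-defined and $H$-equivariant, and then matching the product structure of eigenvalues on $\lien^\Q_\G$ with the $c(\lambda_i)$-hyperbolicity hypothesis so that exactly the right number of factors ($1$ versus $2$) appears. The subtle point is that a product $\mu_{\lambda,i}\mu_{\lambda',j}$ of eigenvalues from two distinct components $\lambda \ne \lambda'$ joined by an edge is a single eigenvalue on $\lien^\Q_\G$, and I must ensure these two components lie in a common orbit carrying an edge (so that $c=2$ there) whenever such a cross-product can arise — this is guaranteed because $H$ acts by graph automorphisms, hence preserves edges in the quotient graph $\overline{\G}$, so an edge between components forces those components into edge-bearing orbits. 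Keeping the integer-like condition intact under the transport, rather than merely over $\R$, is the other place requiring care, handled by the power-of-characteristic-polynomial observation above.
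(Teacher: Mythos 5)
Your overall strategy matches the paper's: restrict $\varphi$ to orbit representatives in one direction, transport each $\varphi_i$ around its orbit by conjugation in the other, with well-definedness coming from commutation with the stabilizer $\rho_i(H_i)$. The forward direction and the well-definedness and equivariance bookkeeping are fine. However, there is a genuine gap in your hyperbolicity verification for the backward direction, precisely at the ``subtle point'' you flag. You claim that a cross-product eigenvalue $\mu_{\lambda}\mu_{\lambda'}$ coming from an edge $\lambda\lambda'$ of $\overline{\G}$ is always controlled by $c$-hyperbolicity because an edge between components forces those components into edge-bearing orbits. That is false: $\lambda$ and $\lambda'$ may lie in two \emph{different} orbits $\kappa_i \neq \kappa_j$, and the invariant $c(\lambda_i)$ only records edges \emph{inside} the orbit $\kappa_i$. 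Example \ref{clambda} of the paper is exactly such a configuration: $\kappa_1=\{\lambda_1,\lambda_2\}$ has $c=1$ and $\kappa_3=\{\lambda_3,\lambda_4\}$ has $c=2$, yet $\lambda_1\lambda_3$ is an edge of $\overline{\G}$, so the assembled $\varphi$ acquires eigenvalues $\mu\mu'$ with $\mu$ an eigenvalue of $\varphi_1$ and $\mu'$ one of $\varphi_3$. Nothing in the hypotheses forbids, say, $2+\sqrt{3}$ being an eigenvalue of $\varphi_1$ and $2-\sqrt{3}$ one of $\varphi_3$, giving $\vert \mu\mu'\vert =1$ and destroying hyperbolicity. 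The $H$-action cannot rescue you here, since it never maps one orbit to another.

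The paper closes this hole with an extra normalization your proposal omits: before assembling $\varphi$, replace each $\varphi_i$ by a suitable power so that for all eigenvalues $\mu$ of $\varphi_i$ and $\mu'$ of $\varphi_j$ with $i\neq j$ one has $\vert\mu\mu'\vert\neq 1$. This is possible because every eigenvalue of every $\varphi_i$ already has absolute value different from $1$, so one can choose exponents making the finitely many sums $n_i\log\vert\mu\vert+n_j\log\vert\mu'\vert$ nonzero. With that in place the case analysis splits correctly: same-orbit products are handled by $c(\lambda_i)$-hyperbolicity, cross-orbit products by the power trick. The remainder of your argument, including the integer-likeness bookkeeping in both directions, is consistent with the paper's proof.
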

\noindent Recall that integer-like means that the characteristic polynomial has coefficients in $\Z$ and constant term $\pm 1$. This condition is equivalent to only having eigenvalues which are algebraic integers. Note that the set of algebraic integers is closed under multiplication.

\begin{proof}
First assume that $\varphi \in \Aut(\lien_\G^\Q)$ is an Anosov automorphism commuting with every element of $\rho(H)$. As explained before we can assume that $\varphi \in L \le T$ or thus in particular that $\varphi(V) = V$. Take $g = p(\varphi) \in G$ the restriction of $\varphi$ to $V$ as introduced above. By taking a power of $\varphi$ we can also assume that $g \in G^0$ or thus $\varphi(V_\lambda) = V_\lambda$ for every $\lambda \in \Lambda$. So by taking $\varphi_i$ the restriction of $\varphi$ to the subspaces $V_{\lambda_i}$, we find in particular that $\rho_i(h) \circ \varphi_i = \varphi_i \circ \rho_i(h)$ for every $h \in H_i$, giving the first condition on the $\varphi_i$. 

For the second condition, we know that $\varphi$ is hyperbolic so we only have to check it for $\lambda_i$ with $c(\lambda_i) = 2$. Assume that $\mu_1, \mu_2$ are two eigenvalues of $\varphi_i$, then we have to show that $\vert \mu_1 \mu_2 \vert \neq 1$. We can assume that $\mu_1 \neq \mu_2$ since otherways there is nothing to prove. There are two possibilities, either $\lambda_i$ is a complete graph or there exists $\lambda^\prime \in \kappa_i$ such that $\lambda_i$ is connected to $\lambda^\prime$. 

In the first case, we have that $\mu_1 \mu_2$ is an eigenvalue of $\varphi$ by Remark \ref{eigenvalues}. In the second case, take $h \in H$ such that $h \cdot \lambda_i = \lambda^\prime$. Because $h(V_{\lambda_i}) = V_{\lambda^\prime}$ and $\varphi \comp h = h \comp \varphi$, we have that $\restr{\varphi}{V_{\lambda^\prime}} = h \comp \varphi_i \comp h^{-1}$ and thus $\restr{\varphi}{V_{\lambda^\prime}}$ has the same eigenvalues as $\varphi_i$. In particular, $\mu_1 \mu_2$ is again an eigenvalue of $\varphi$ by Remark \ref{eigenvalues}. So in both cases the statement follows from the hyperbolicity of $\varphi$, finishing the first implication of the theorem.

For the other implication, assume that $\varphi_i$ exists as in the theorem. Note that the eigenvalues of $\varphi_i$ are algebraic integers. We now construct a hyperbolic element $\varphi \in \Aut(\lien_\G^\Q)$ commuting with every element in $\rho_i(H_i)$. By taking some power of the $\varphi_i$, we can assume that for all eigenvalues $\mu_i, \mu_j$ of $\varphi_i$ and $\varphi_j$ respectively with $i\neq j$, we have that $\vert \mu_i \mu_j \vert \neq 1$.  Every $\lambda \in \Lambda$ lies in some orbit $\kappa_i$ of the action of $H$ on $\Lambda$ and hence is of the form $\pi(h)(\lambda_i)$ for some $i \in \{1,\ldots, k\}$, meaning that $\rho(h)(V_{\lambda_i}) = V_{\lambda}$. 
%\cb{Do we need $\pi$ again? or we want to denote the action by $\cdot$? I don't mind keeping $\pi$.}
Now define $g_\lambda: V_\lambda \to V_\lambda$ as $g_\lambda =  \rho(h) \circ \varphi_i \circ \rho(h)^{-1}$. %for every $x \in V_\lambda$. 
%\textcolor{blue}{I think here we should take the other way around.}

We claim that the definition of $g_\lambda$ does not depend on the choice of $h$ such that $\pi(h)(\lambda_i) = \lambda$. Indeed, assume that $h_1, h_2 \in H$ with $\pi(h_1)(\lambda_i) = \pi(h_2)(\lambda_i) = \lambda$, then $h_2^{-1} h_1 \in H_i$ by definition. By assumption $\varphi_i$ commutes with $\rho(h_2^{-1}h_1)$, so $\rho(h_2^{-1}h_1)\varphi_i = \varphi_i \rho(h_2^{-1}h_1)$ on $V_{\lambda_i}$ which implies the claim.

Take $g: V \to V$ as the direct sum of all $g_\lambda: V_\lambda \to V_\lambda$. It is clear that $g \in G$ and hence we can consider $\varphi \in \Aut(\lien_\G^\Q)$ with $p(\varphi) = g$. Because of the assumption on the eigenvalues of the $\varphi_i$ and the fact that $\varphi_i$ is $c(\lambda_i)$-hyperbolic, we will show that $\varphi$ is hyperbolic. Indeed, the eigenvalues of $g$ are also eigenvalues of some $\varphi_i$, since it is equal to a conjugate of $\varphi_i$ on $V_\lambda$ for every $\lambda \in \Lambda$. The eigenvalues of $\varphi$ are hence eigenvalue of some $\varphi_i$ or a product of two eigenvalues $\mu_{\lambda,i}\mu_{\lambda^\prime,j}$ for eigenvalues $\mu_{\lambda,i}$ and $\mu_{\lambda^\prime,j}$ of $g_\lambda$ and $g_{\lambda^\prime}$ with $\lambda$ and $\lambda^\prime$ connected. If $\lambda$ and $\lambda^\prime$ lie in the same orbit, then we know that $c(\lambda) = 2$ and thus that the eigenvalue is different from $1$ in absolute value. If $\lambda$ and $\lambda^\prime$ lie in a different orbit, then they correspond to different $\varphi_i$ and $\varphi_j$, hence the eigenvalue is different from $1$ in absolute value by the extra condition mentioned before. We conclude that $\varphi$ is a hyperbolic automorphism. In this proof we also showed that all the eigenvalues of $\varphi$ are algebraic integers, therefore the automorphism $\varphi$ is integer-like.

We are left to check that $\varphi$ commutes with the finite group $\rho(H)$. It suffices to check that $g$ commutes with every element $p(\rho(h)) \in p(\rho(H))$ on every subspace $V_\lambda$. Take $\lambda_i$ such that $\lambda \in\kappa_i$ and consider $\tilde{h} \in H$ such that $\rho(\tilde{h})(V_{\lambda_i}) = V_\lambda$. For every $x \in V_\lambda$, we get that $$g(x) = g_\lambda(x) =  \rho(\tilde{h}) \circ \varphi_i \circ \rho(\tilde{h}^{-1})(x).$$ Now $\rho(h \circ \tilde{h})$ maps $V_{\lambda_i}$ to $\rho(h)(V_\lambda)$, so the map $g_{\pi(h)(\lambda)}$ is given by $$\rho(h \circ \tilde{h}) \circ \varphi_i \circ \rho\left(( h \circ \tilde{h})^{-1}\right) (y) = \rho(h) \circ g_\lambda \circ \rho(h^{-1})(y)$$ for every $y \in \rho(h)(V_\lambda)$. The statement follows and this finishes the proof.
\end{proof}

In order to achieve a more workable condition, we recall the following result of \cite{dd13-1}. 

\begin{theorem} \label{vert}
	Let $\rho: G \to \GL(n,\Q)$ be a $\Q$-irreducible representation. Then there exists a $c$-hyperbolic, integer-like matrix $C\in \GL_{mn}(\Q)$ 
	which commutes with 
	$m \rho=\underbrace{\rho\oplus \rho \oplus \cdots \oplus \rho}_{m\ {\rm times}}$ 
	if and only if $\rho$ splits in strictly more than $\frac{c}{m}$ components when seen as a representation over $\R$.
\end{theorem}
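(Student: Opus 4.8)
The plan is to analyze the algebra of matrices commuting with $m\rho$ and then read off what $c$-hyperbolicity and the integer-like condition force on the absolute values of the eigenvalues. First I would invoke Schur's lemma over $\Q$: since $\rho$ is $\Q$-irreducible, the commutant $D = \End_{\Q G}(\Q^n)$ is a division algebra with center a number field $K$, and the full commutant of $m\rho$ in $M_{mn}(\Q)$ is $M_m(D)$. Thus every $C$ commuting with $m\rho$ lies in $M_m(D)$, and requiring $C \in \GL_{mn}(\Q)$ just says $C$ is a unit of $M_m(D)$. Being integer-like forces the eigenvalues of $C$ to be algebraic integers and the reduced norm of $C$ to be a unit of $\mathcal{O}_K$; in particular each eigenvalue is itself a unit. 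Because $\rho$ is a representation of a finite group, $K$ is abelian over $\Q$, hence either totally real or a CM field, and the number $s$ of $\R$-irreducible components of $\rho$ equals the number of archimedean places of $K$ (equivalently the number of simple factors of $D \otimes_\Q \R$). The bookkeeping fact I would establish is that a generic unit $C$ produces exactly $ms$ distinct absolute values of eigenvalues: each archimedean place of $K$ contributes $m$ of them, with complex conjugation (at the complex places, and at the quaternionic real places when the Schur index is $2$) identifying eigenvalues in pairs of equal modulus.

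For the necessity direction I would show that the unit condition produces a single multiplicative relation of length $ms$ among these moduli. Writing the distinct absolute values as $a_1,\dots,a_N$ with $N \le ms$, the fact that the product of all $mn$ eigenvalues equals $\pm 1$ gives $\prod_j a_j^{\mu_j} = 1$; at the complex and quaternionic places the multiplicities $\mu_j$ are even, so extracting the common root collapses this to $\prod_j a_j^{c_j} = 1$ with $\sum_j c_j = ms$. This is a product of at most $ms$ moduli (with repetition) equal to $1$, so if $C$ were $c$-hyperbolic with $c \ge ms$ this relation would already violate $c$-hyperbolicity. Hence the existence of a $c$-hyperbolic integer-like $C$ forces $ms > c$, i.e.\ $s > \tfrac{c}{m}$.

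For sufficiency, assuming $ms > c$, I would realize $C$ as a unit lying in a suitable commutative subfield of $M_m(D)$. One selects an extension $L/K$ sitting inside $M_m(D)$ whose degree and archimedean signature are arranged so that a unit of $\mathcal{O}_L$ carries exactly the $ms$ absolute values above (totally real over a totally real $K$, and automatically of CM type over a CM $K$); the unit group then has rank $ms-1$. By Dirichlet's unit theorem its logarithmic absolute value vectors fill out the hyperplane cut out by the single relation $\sum_j e_j = 0$. Taking $C$ to be the image of a generic unit $\lambda$ yields an integer-like element of $M_m(D)$ commuting with $m\rho$ by construction. It remains to choose $\lambda$ generically: each forbidden relation $\prod_j a_j^{k_j}=1$ with $1 \le \sum_j k_j \le c < ms$ is not a multiple of the defining relation (whose length is $ms$) and so cuts out a \emph{proper} subspace of the Dirichlet hyperplane; since there are only finitely many such relations and the unit lattice has full rank, a $\lambda$ avoiding all of them exists, and the resulting $C$ is $c$-hyperbolic.

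The main obstacle I expect is the identification in the first paragraph: pinning down that an optimally chosen unit yields exactly $ms$ independent absolute values with one and only one forced relation, uniformly across the totally real, CM, and (Schur index $2$) quaternionic cases. This requires a careful local analysis of $D \otimes_\Q \R$ at each archimedean place, distinguishing the real, complex, and quaternionic types and tracking how complex conjugation pairs eigenvalues, together with the Brauer--Speiser restriction on the Schur indices of rational representations of finite groups. Once this local bookkeeping is in place, both the necessity relation and the genericity argument become routine.
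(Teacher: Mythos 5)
The paper itself gives no proof of this statement --- it is quoted from \cite{dd13-1}, with \cite{dv08-1} cited for details --- so there is no in-paper argument to compare against; your outline does, however, follow the same route as the proof in those references: Schur's lemma identifies the commutant of $m\rho$ with $M_m(D)$ for a division algebra $D$ whose centre $K$ is abelian over $\Q$, the integer-like condition makes $C$ a unit whose norm relation forces one multiplicative dependence of length $ms$ among the eigenvalue moduli (necessity), and Dirichlet's unit theorem in a maximal subfield of $M_m(D)$ of controlled signature supplies units whose logarithmic vectors avoid the finitely many proper subspaces cut out by shorter relations (sufficiency). Two remarks on the details. First, collapsing the norm relation from length $m[K:\Q]$ to length $ms$ by ``extracting the common root'' genuinely requires $K$ to be totally real or CM --- a mixed-signature centre would leave exponents $1$ and $2$ side by side in the same relation --- and you do supply exactly this via the abelianness of $K$, so that step is sound. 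Second, your parenthetical identification of $s$ with the number of archimedean places of $K$ (equivalently, of simple factors of $D\otimes_\Q\R$) fails in the case where the Schur index is $2$ but $D$ splits at an infinite place: there the local factor is $M_2(\R)$ or $M_2(\C)$, the corresponding $\R$-irreducible constituent occurs with multiplicity $2$, and $s$ is twice the number of places. The headline count ``$ms$ moduli subject to one forced relation of total length $ms$'' still comes out right in that case (each split place then contributes $2m$ unpaired moduli rather than $m$), but the signature construction in the sufficiency direction must be redone for it; since this is precisely the local archimedean bookkeeping you explicitly defer, I would call this an acknowledged omission plus one incorrect side claim inside it, rather than a flaw in the overall strategy.
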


We refer to \cite{dv08-1} for more details. Combined with Theorem \ref{vert}, this gives us the following result.

\begin{theorem}
	\label{maintheorem}
	Let $\G$ be a graph and $\lien^\Q_\G$ be the rational Lie algebra associated to this graph. Let $\rho: H \to \Aut(\lien^\Q_\G)$ be a representation of a finite group with $\rho_i: H_i \to \GL(V_{\lambda_i})$ the representations as introduced before. Then the following are equivalent.
	
	\begin{center}
		There exists an Anosov automorphism $\varphi \in \Aut(\lien^\Q_\G)$ commuting with every element of $\rho(H)$. \\
		$\Updownarrow$ \\
For every $1 \leq i \leq k$, every $\Q$-irreducible component of $\rho_i$ that occurs with multiplicity $m$\\ splits in strictly more than $\frac{c(\lambda_i)}{m}$ components over $\R$. 
	\end{center}
\end{theorem}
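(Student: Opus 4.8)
The plan is to obtain Theorem \ref{maintheorem} by feeding Theorem \ref{vert} into the earlier characterization of this section (the one reducing the problem to component automorphisms $\varphi_i$). That characterization already reduces the existence of a global Anosov automorphism $\varphi \in \Aut(\lien^\Q_\G)$ commuting with $\rho(H)$ to a condition on each coherent component separately: for every $1 \le i \le k$ there must exist a $c(\lambda_i)$-hyperbolic, integer-like $\varphi_i \in \GL(V_{\lambda_i})$ commuting with every element of $\rho_i(H_i)$. Hence it suffices to prove, for each fixed $i$, that the existence of such a $\varphi_i$ is equivalent to the stated splitting condition on the $\Q$-irreducible constituents of $\rho_i$. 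This is now a purely representation-theoretic statement about the finite group $H_i$ acting rationally on $V_{\lambda_i}$, and it is exactly the setting of Theorem \ref{vert}, applied to each isotypic block.

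Fix $i$ and decompose $V_{\lambda_i}$ over $\Q$ into its isotypic components $V_{\lambda_i} = \bigoplus_j V_{ij}$, where $V_{ij}$ carries $m_{ij}$ copies of a $\Q$-irreducible representation $\tau_{ij}$; this is possible by Maschke's theorem in characteristic $0$. For the forward implication I would start from a $c(\lambda_i)$-hyperbolic integer-like $\varphi_i$ commuting with $\rho_i(H_i)$. Since the isotypic components are canonical, $\varphi_i$ preserves each $V_{ij}$, and the restriction $\varphi_i|_{V_{ij}}$ commutes with $m_{ij}\tau_{ij}$. Its eigenvalues form a subset of those of $\varphi_i$, so $\varphi_i|_{V_{ij}}$ is again $c(\lambda_i)$-hyperbolic and integer-like, exactly the inheritance under restriction to a rational invariant subspace already used in the proof of that characterization. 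Applying the ``only if'' direction of Theorem \ref{vert} with $c = c(\lambda_i)$ and $m = m_{ij}$ then forces $\tau_{ij}$ to split in strictly more than $\frac{c(\lambda_i)}{m_{ij}}$ components over $\R$, which is the desired condition.

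For the converse I would construct $\varphi_i$ blockwise. For each $j$, the splitting hypothesis together with the ``if'' direction of Theorem \ref{vert} produces a $c(\lambda_i)$-hyperbolic, integer-like $C_{ij} \in \GL(V_{ij})$ commuting with $m_{ij}\tau_{ij}$. The naive candidate $\bigoplus_j C_{ij}$ commutes with $\rho_i(H_i)$ and is integer-like, but need not be $c(\lambda_i)$-hyperbolic, and this is the main obstacle: $c(\lambda_i)$-hyperbolicity is a condition on products of up to $c(\lambda_i)$ eigenvalues, and such a product may mix eigenvalues from different blocks $V_{ij}$. Here the fact that $c(\lambda_i) \in \{1,2\}$ is decisive, since the only uncontrolled products are those of the shape $\mu\nu$ with $\mu$ an eigenvalue of some $C_{ij}$ and $\nu$ of some $C_{ij'}$ with $j \neq j'$. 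Replacing each $C_{ij}$ by a suitable positive power $C_{ij}^{e_{ij}}$, which preserves both $c(\lambda_i)$-hyperbolicity and the integer-like property and keeps the commuting relation intact, I can arrange $|\mu\nu| \neq 1$ for all such cross-block pairs, exactly as in the power argument in the proof of that characterization; there are only finitely many eigenvalues, hence only finitely many multiplicative relations to avoid. The resulting $\varphi_i = \bigoplus_j C_{ij}^{e_{ij}}$ then meets all three requirements, and feeding the family $\{\varphi_i\}_i$ back into the earlier characterization yields the global Anosov automorphism. The only genuinely delicate point throughout is this blockwise-to-global passage for hyperbolicity, and it is handled uniformly by the bound $c(\lambda_i)\le 2$.
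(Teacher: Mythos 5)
Your proposal is correct and follows exactly the route the paper intends: the paper derives Theorem \ref{maintheorem} simply by combining the preceding component-wise characterization with Theorem \ref{vert}, leaving the isotypic decomposition, the restriction argument, and the cross-block power adjustment implicit. Your write-up just supplies those routine details (all of which check out, including the choice of exponents $e_{ij}$ to kill the finitely many cross-block products), so there is nothing genuinely different or missing here.
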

For our purposes, we can only have $c(\lambda_i) \in \{1, 2\}$. In the case $c(\lambda_i) = 1$, the condition states that every $\Q$-irreducible component of $\rho_i$ which is also irreducible as a representation over $\R$ must occur at least twice in $\rho_i$. If $c(\lambda_i) = 2$, then the condition states that every $\Q$-irreducible component which is irreducible over $\R$ must occur at least three times, and if a $\Q$-irruducible component splits in exactly $2$ components over $\R$ it must occur at least twice. The $\Q$-irreducible components which split in more components over $\R$ automatically satisfy the condition.

The trivial representation of dimension $1$ is both irreducible over $\Q$ and over $\R$. In particular, if every $\rho_i$ is the trivial representation, we get the following consequence.

\begin{corollary}\label{MainCorollary}
Let $\G$ be a graph and $\lien^\Q_\G$ be the rational Lie algebra associated to this graph. Let $\rho: H \to \Aut(\lien^\Q_\G)$ be a representation of a finite group such that the representation $\rho_i: H_i \to \GL(V_{\lambda_i})$ introduced above are trivial. Then there exists an Anosov automorphism $\varphi \in \Aut(\lien^\Q_\G)$ commuting with every element of $\rho(H)$ if and only if $\dim(V_{\lambda_i}) \geq 2$ if $c(\lambda_i) = 1$ and $\dim(V_{\lambda_i}) \geq 3$ otherwise.
	\end{corollary}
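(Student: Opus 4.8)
The plan is to obtain the corollary as an immediate specialization of Theorem \ref{maintheorem} to the case where each $\rho_i$ is trivial, so the entire argument reduces to identifying the $\Q$-irreducible constituents of a trivial representation and then carrying out the resulting arithmetic. First I would note that, by hypothesis, each $\rho_i \colon H_i \to \GL(V_{\lambda_i})$ acts as the identity, so as a representation over $\Q$ it decomposes as the direct sum of $\dim(V_{\lambda_i})$ copies of the one-dimensional trivial representation of $H_i$.

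Next I would record the elementary fact that the one-dimensional trivial representation is irreducible both over $\Q$ and over $\R$, so it splits into exactly one component when regarded over $\R$. Consequently $\rho_i$ has a single $\Q$-irreducible component, namely the trivial representation, and it occurs with multiplicity $m = \dim(V_{\lambda_i})$. Plugging this into the criterion of Theorem \ref{maintheorem}, the requirement that this component split into strictly more than $\frac{c(\lambda_i)}{m}$ components over $\R$ becomes $1 > \frac{c(\lambda_i)}{\dim(V_{\lambda_i})}$, which is equivalent to the integer inequality $\dim(V_{\lambda_i}) > c(\lambda_i)$.

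Finally I would use that $c(\lambda_i) \in \{1,2\}$ to unwind this inequality: when $c(\lambda_i) = 1$ it reads $\dim(V_{\lambda_i}) \geq 2$, and when $c(\lambda_i) = 2$ it reads $\dim(V_{\lambda_i}) \geq 3$. Since Theorem \ref{maintheorem} requires the condition to hold simultaneously for every $1 \leq i \leq k$, combining these thresholds over all $i$ yields exactly the stated characterization. I expect no genuine obstacle here; the only point requiring care is the bookkeeping of multiplicities—recognizing that the whole space $V_{\lambda_i}$ contributes to the multiplicity of the single trivial $\Q$-irreducible component—after which the conclusion is purely a matter of translating the strict inequality $m > c(\lambda_i)$ into the two integer bounds.
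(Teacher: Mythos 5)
Your proposal is correct and follows essentially the same route as the paper: the paper also observes that the trivial one-dimensional representation is irreducible over both $\Q$ and $\R$, so it is the unique $\Q$-irreducible constituent of $\rho_i$ with multiplicity $\dim(V_{\lambda_i})$, and the criterion of Theorem \ref{maintheorem} reduces to $\dim(V_{\lambda_i}) > c(\lambda_i)$ with $c(\lambda_i) \in \{1,2\}$. Nothing is missing.
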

So if $H$ is trivial, we recover the main result of \cite{dm05-1}.
\section{Applications}
\label{sec:examples}

In this section, we apply the previous results to construct new examples of Anosov diffeomorphisms on infra-nilmanifolds. First we recall some machinery for constructing almost-Bieberbach groups given a rational Lie algebra $\lien^\Q$ and a faithful representation $\rho: H \to \Aut(\lien^\Q)$. Next we apply this to give concrete examples of graphs leading to infra-nilmanifolds admitting an Anosov diffeomorphism.

\paragraph{\textbf{Constructing almost-Bieberbach groups with given holonomy representation}}

An al\-most-\-Bie\-ber\-bach group $\Gamma \le \Aff(N)$ induces a torsion-free radicable nilpotent group $M^\Q \le N$ and a faithful representation of a finite group $\rho: H \to \Aut(M^\Q)$. But vice versa, given a torsion-free radicable nilpotent group $M^\Q$ and a faithful representation $\rho: H \to \Aut(M^\Q)$ of a finite group, it is not always easy to check whether $H$ corresponds to the rational holonomy representation of an almost-Bierberbach group $\Gamma$. The hard condition to achieve is the torsion-freeness of the group $\Gamma$. 

In some special cases, this problem is completely solved though. The case where $H$ is the trivial group follows immediately from the fundamental work of Mal'cev \cite{malc51-1}, since the group $M^\Q$ is always the radicable hull of some lattice $M \le N$ in a $1$-connected nilpotent Lie group $N$. The next interesting case is for cyclic groups $H$, which was treated in \cite{dv11-1} although it was not explicitly stated as a separate theorem therein. The proof of \cite[Proposition 5.2.]{dv11-1} contains the necessary arguments and for the convenience of the reader, we sketch the proof. 

\begin{theorem}
\label{theorem:eigenvalue1}
Let $\rho: H \to \Aut(M^\Q)$ be a faithful representation of a finite cyclic group $H$ on a torsion-free radicable nilpotent group $M^\Q$, where $\rho(H)$ is generated by the element $\phi \in H$. The following are equivalent:
\begin{center}
The representation $\rho$ is a rational holonomy representation of an almost-Bieberbach group $\Gamma$.  \\
	$\Updownarrow$ \\
The automorphism $\phi$ has eigenvalue $1$.
\end{center}
\end{theorem}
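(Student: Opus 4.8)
The plan is to reduce the whole statement to one fact about affine transformations with finite-order linear part. For an automorphism $\phi$ of finite order of the $1$-connected nilpotent Lie group $N$, I would first prove the following Key Lemma: the twisted difference map $\psi_\phi\colon N\to N,\ x\mapsto x\phi(x)^{-1}$ is surjective if and only if $\phi$ has no eigenvalue $1$. The proof is by induction on the nilpotency class: on an abelian quotient $\psi_\phi$ is the linear map $I-\phi$, which is onto exactly when $1$ is not an eigenvalue; for the inductive step a central term $Z$ and the quotient $N/Z$ are both $\phi$-invariant with $\phi$ still free of the eigenvalue $1$, so one solves first in $N/Z$, lifts, and corrects by a central element, using that central factors pass through the products and that $\psi_\phi$ is onto on $Z$. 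The relevance is the elementary remark that $(n_0,\phi)\in\Aff(N)$ has a fixed point in $N$ precisely when $n_0\in\mathrm{Im}(\psi_\phi)$, since $n_0\phi(x)=x$ rewrites as $n_0=x\phi(x)^{-1}$.

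For the direction ``holonomy representation $\Rightarrow$ eigenvalue $1$'' I would argue by contraposition. Assume $\phi=\rho(h)$ has no eigenvalue $1$ and let $\Gamma$ be any almost-Bieberbach group realizing $\rho$. Choose $\gamma\in\Gamma$ mapping to the generator $h$; its linear part $A\in\Aut(N)$ has finite order and the same eigenvalues as $\rho(h)$ (the two differ by an inner automorphism of $M^\Q$, which is unipotent), so $A$ also has no eigenvalue $1$. By the Key Lemma $\psi_A$ is onto, hence $\gamma=(n_0,A)$ has a fixed point and is therefore conjugate in $\Aff(N)$, by a translation, to $(e,A)$, whose order divides $|H|$. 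As order is a conjugacy invariant, $\gamma$ has finite order; being nontrivial and outside $M$, it is a torsion element, contradicting torsion-freeness of $\Gamma$. Thus a realizing $\Gamma$ forces $\phi$ to have eigenvalue $1$.

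For the converse I would construct $\Gamma$ explicitly. Pick $0\neq X\in\lien^\Q$ with $\phi(X)=X$ and set $n_0=\exp(\tfrac1{|H|}X)$; since $\phi$ fixes $X$ the telescoping product collapses and $\gamma=(n_0,\phi)$ satisfies $\gamma^{|H|}=\exp(X)$, an element of infinite order, so $\gamma$ is not torsion. Because $\phi$ fixes $X$ it commutes with $\mathrm{ad}\,X$, so $\phi$ and the unipotent automorphism $\mathrm{Ad}(n_0)=\exp(\tfrac1{|H|}\mathrm{ad}\,X)$ generate an abelian group of rational automorphisms, which stabilizes a common lattice $M\subseteq M^\Q$ that may be taken to contain $\exp(\Z X)$. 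Then $\gamma$ normalizes $M$ with conjugation action inducing $\rho$, and $\Gamma=\langle M,\gamma\rangle$ satisfies $\Gamma\cap N=M$ and has holonomy $H$.

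The main obstacle is the torsion-freeness of this $\Gamma$: one must exclude a fixed point for every $m\gamma^{j}$ with $1\le j<|H|$ and $m\in M$, that is, show $m\,n_0^{\,j}\notin\mathrm{Im}(\psi_{\phi^{j}})$ for all such $m,j$. Here the fixed vector $X$ does the work: it survives in a suitable $\phi$-equivariant quotient of $\lien^\Q$ — the abelianization when $\phi$ has eigenvalue $1$ there, and the center otherwise (which forces $X\in[\lien^\Q,\lien^\Q]$) — and on that quotient the relevant power $(m\gamma^{j})^{d}$, with $d$ the order of $h^{j}$, retains the nonzero contribution $\exp(\tfrac{jd}{|H|}X)$ and hence cannot be trivial. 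Arranging this incompatibility simultaneously for all cosets, by matching the translation length $\tfrac1{|H|}X$ with the chosen lattice $M$, is the technical heart of the argument and is exactly what is carried out in the proof of \cite[Proposition 5.2]{dv11-1}.
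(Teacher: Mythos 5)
Your proposal is sound in outline but takes a genuinely different route from the paper in both directions. For ``holonomy $\Rightarrow$ eigenvalue $1$'' the paper argues directly rather than by contraposition: it picks $\gamma=(n,\phi)\in\Gamma$ above the generator, notes that $m=\gamma^{|H|}\in M\setminus\{e\}$ by torsion-freeness, and that $m=\gamma m\gamma^{-1}=n\phi(m)n^{-1}$, so $\log m$ is an explicit nonzero fixed vector of $\mathrm{inn}_n\circ\phi$. Your version, via the Key Lemma that $x\mapsto x\phi(x)^{-1}$ is onto when $1$ is not an eigenvalue and the resulting fixed point for $\gamma$, is also correct, but it costs an induction on the nilpotency class that the paper's one-line argument avoids. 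In both versions the assertion that the linear part of $\gamma$ and $\rho(h)$ have the same eigenvalues needs the observation that inner automorphisms act trivially on the associated graded of the lower central series, which determines the spectrum; ``differing by a unipotent automorphism'' does not by itself preserve eigenvalues.

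For the converse the two approaches diverge more. The paper does not build $\gamma$ explicitly: it replaces $M$ by a finite-index overgroup on which the induced maps $\phi_i$ on the graded quotients are totally reducible over $\Z$ (Lemma 4.3 of \cite{dv11-1}), finds a graded piece where $\phi_i$ has eigenvalue $1$, gets a torsion-free extension $1\to\Z^{n_i}\to E\to\Z_{|H|}\to 1$ there (Proposition 5.1 of \cite{dv11-1}), and then cites Theorem 4.1 of \cite{dv11-1}, which says that torsion-freeness of a single graded extension suffices to produce a torsion-free $\Gamma$. Your explicit $\gamma=(\exp(X/|H|),\phi)$ is the natural candidate, but the torsion-freeness check is thinner than you suggest: when $X$ lies in $[\lien^\Q,\lien^\Q]$, the projection of $(m\gamma^{j})^{d}$ to the relevant graded piece is not simply $N_{\phi^{j}}(\bar m)+\tfrac{jd}{|H|}\bar X$ --- there are commutator correction terms from the non-central components of $m$ --- and one also needs the lattice in that piece to split $\phi$-invariantly as $\Z\bar X\oplus C$ so that the norm map cannot hit $-\tfrac{jd}{|H|}\bar X$; moreover, enlarging $M$ so that it is normalized by both $\phi$ and $\mathrm{inn}_{n_0}$ can destroy this arithmetic condition. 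These are precisely the points handled by the total-reducibility lemma and Theorem 4.1 of \cite{dv11-1}, so since you defer to that reference the argument does close, but your sketch should make explicit that this --- and not merely ``matching the translation length with $M$'' --- is what is being outsourced.
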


\begin{proof}
One direction is immediate, namely if $\rho: H \to \Aut(M^\Q)$ is the rational holonomy representation of an almost-Bieberbach group $\Gamma$ then $\phi$ has eigenvalue $1$. Indeed, assume that $\Gamma \le M^\Q \rtimes H$ is an almost-Bieberbach group with rational holonomy representation $H$. By definition there exists $n \in M^\Q$ such that $\gamma = (n,\phi) \in \Gamma$. Write $m = \gamma^{\vert H \vert} \in M \setminus \{e\}$, then $m = \gamma m \gamma^{-1} = n \phi(m) n^{-1}$. So the automorphism $M^\Q \to M^\Q: x \mapsto n \phi(x) n^{-1}$ has eigenvalue $1$, hence also the automorphism $\phi$ has eigenvalue $1$, since both automorphism are conjugate.  

For the other direction, it is easy to find a finitely generated torsion-free nilpotent subgroup $M$ such that $M^\Q$ is the radicable hull of $M$ and $\phi(M) =M$. The automorphism $\phi$ induces automorphisms $\phi_i \in \GL(n_i,\Z)$ on the quotients $\faktor{M \cap \gamma_i(M^\Q)}{M \cap \gamma_{i+1}(M^\Q)} \approx \Z^{n_i}$. By \cite[Lemma 4.3.]{dv11-1} there exists $M^\prime \le M^\Q$ which contains $M$ as a subgroup of finite index such that $\phi(M^\prime) = M^\prime$ and the induced representations $\phi_i \in \GL(n_i,\Z)$ for $M^\prime$ are totally reducible. Since $\phi$ has eigenvalue $1$, there exists $i$ such that the induced representation $\phi_i$ also has eigenvalue $1$. Then \cite[Proposition 5.1.]{dv11-1} implies there exists a torsion-free extension $1 \to \Z^{n_i} \to E \to \Z_{\vert H \vert} \to 1$ which induces $\phi_i$ and hence \cite[Theorem 4.1.]{dv11-1} gives us the desired result.
\end{proof}

\noindent The extra ingredient used in the proof of \cite[Proposition 5.2.]{dv11-1} was showing that every automorphism of finite order on a free nilpotent Lie algebra of nilpotency class $c \geq 2$ has $1$ as eigenvalue.
For graphs on the other hand, every graph automorphism induces an automorphism of the Lie algebra which automatically has eigenvalue $1$, leading to the following result.

\begin{corollary}\label{cor:graph}
Let $\G$ be a graph and $\phi \in \Aut(\G)$ be a graph automorphism. The natural representation $\langle \phi \rangle \to \Aut(\lien^\Q_\G)$ is the rational holonomy representation of an almost-Bieberbach group.
\end{corollary}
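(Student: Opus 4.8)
The plan is to reduce the statement to Theorem~\ref{theorem:eigenvalue1}, which already characterizes exactly when a faithful representation of a finite cyclic group is a rational holonomy representation. Setting $M^\Q = \exp(\lien^\Q_\G)$, so that $\Aut(M^\Q) \approx \Aut(\lien^\Q_\G)$ as recalled in Section~\ref{sec:AnosovonIN}, the natural representation $\langle \phi \rangle \to \Aut(\lien^\Q_\G)$ becomes a representation of a finite cyclic group on a torsion-free radicable nilpotent group. Theorem~\ref{theorem:eigenvalue1} then applies once two things are checked: that the representation is faithful, and that the generator $\phi$ has eigenvalue $1$ as an automorphism of $\lien^\Q_\G$.

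First I would verify faithfulness. The image of $\phi$ lies in the subgroup $T$ and restricts on the abelianization $V$ to the permutation matrix $P_\phi$ attached to $\phi$ acting on the vertex set $S$. Since a nontrivial permutation of the basis $S$ moves at least one basis vector, it acts nontrivially on $V$; hence the representation is already faithful on $V$ and therefore on all of $\lien^\Q_\G$. In particular $\langle \phi \rangle$ is a genuine finite cyclic group mapping injectively into $\Aut(\lien^\Q_\G)$.

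The only remaining condition is that the induced automorphism has eigenvalue $1$, and this is where the graph setting is genuinely simpler than the free nilpotent case discussed right after Theorem~\ref{theorem:eigenvalue1}. Because the induced automorphism preserves the subspace $V$ and acts there exactly as the permutation matrix $P_\phi$, it fixes the all-ones vector: indeed $P_\phi\!\left(\sum_{\alpha \in S}\alpha\right) = \sum_{\alpha \in S}\phi(\alpha) = \sum_{\alpha \in S}\alpha$, since $\phi$ permutes $S$. Thus $1$ is an eigenvalue of $P_\phi$, hence of the induced automorphism of $\lien^\Q_\G$, and Theorem~\ref{theorem:eigenvalue1} produces an almost-Bieberbach group whose rational holonomy representation is the given one.

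There is essentially no hard step here: in the free nilpotent setting one must argue separately that every finite-order automorphism of a nilpotent Lie algebra of class $\geq 2$ has eigenvalue $1$, whereas for graphs the linear part of the induced automorphism is literally a permutation matrix, which visibly fixes the all-ones vector. The only points requiring any care are confirming that the natural representation is faithful and observing that eigenvalue $1$ on the invariant subspace $V$ already forces eigenvalue $1$ on $\lien^\Q_\G$, both of which are immediate.
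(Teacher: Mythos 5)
Your proof is correct and follows essentially the same route as the paper: the key point in both is that the all-ones vector $\sum_{\alpha\in S}\alpha$ is fixed by the permutation matrix $P_\phi$, so the induced automorphism of $\lien^\Q_\G$ has eigenvalue $1$, and Theorem \ref{theorem:eigenvalue1} then applies. Your additional verification of faithfulness is a sensible (if routine) check of a hypothesis the paper leaves implicit, but it does not change the argument.
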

\begin{proof}
Consider the vector $v = \displaystyle \sum_{\alpha \in S} \alpha$ with $S$ the set of vertices of the graph $\G$. Since $\phi$ is an automorphism of the graph, the vector $v$ is an eigenvector for eigenvalue $1$ of the corresponding Lie algebra automorphism. The statement now follows immediately from Theorem \ref{theorem:eigenvalue1}.
\end{proof}
\noindent For the families of examples we will construct we will hence mainly focus on cyclic subgroups of graph automorphisms.

\paragraph{\textbf{Anosov diffeomorphisms for holonomy group $\Z_n$}}

We now give families of graphs $\G$ with  free actions of cyclic subgroups $H \subset \Aut(\G)$ by graph automorphism groups giving rise to infra-nilmanifolds admitting Anosov diffeomorphisms. The general strategy for constructing such examples is the following. We start with a graph $\overline{\G}$, with a vertex set $\Lambda$, on which a finite group $H$ acts freely, for example the Cayley graph of $H$ for some finite generating set. We realize the graph $\overline{G}$ as a quotient graph of a simple graph $\G$ satisfying the following three conditions:
	\begin{enumerate}
		\item the action of $H$ is induced by an action of $H$ via graph automorphisms on $\G$,
		\item each coherent class $\lambda \in \Lambda$ is of size at least 2, 
		\item the size of $\lambda$ is at least 3 whenever $\lambda \mu$ is an edge in $\overline{\G}$ for some $\mu$ in the orbit of $\lambda$.
	\end{enumerate}
	Using Corollary \ref{MainCorollary}, if there exists an infra-nilmanifold associated to the graph $\G$ corresponding to the action of $H$, it admits an Anosov diffeomorphism. Below we present examples for the cyclic groups $\Z_n$ and because of Corollary \ref{cor:graph}, these always correspond to an infra-nilmanifold. 
	%\item the size of each coherent class in an orbit is the same,
%We will describe these families in terms of their quotient graphs (See \ref{}).

\begin{example}[Holonomy $\Z_2$]\label{FamilyI}
Consider a graph $\G$  with $2 m$ coherent classes, $m \geq 1$ where each coherent class is of the size $\ell$ where $\ell \geq 3$ and whose quotient graph is given by \[ \begin{tikzpicture}[font=\small,baseline=-4]
\node[vertex,label={below:$\lambda_1$}] (1) at (0,0) {};
\node[vertex,label={below:$\lambda_3$}] (2) at (1, 0) {};

\node[vertex,label={center:$$}] (3) at (1,0) {};
\node[circle,fill,scale=0,label={center:$\cdots$}] (4) at (1.5,0) {};

\node[vertex,label={below:$\lambda_{2m-1}$}] (5) at (2,0) {};
\node[vertex,label={below:$\lambda_{2m}$}] (6) at (3, 0) {};

\node[vertex,label={center:$$}] (8) at (3,0) {};
\node[circle,fill,scale=0,label={center:$\cdots$}] (7) at (3.5,0) {};
%\node[circle,fill,scale=0,label={center:$\cdots$}] (7) at (4,0) {};

\node[vertex,label={below:$\lambda_4$}] (9) at (4,0) {};
\node[vertex,label={below:$\lambda_2$}] (10) at (5, 0) {};

\path[-]
 (1) edge (2)
 (2) edge  (3)
 (5) edge (6)
%  (7) edge  (9)
  (9) edge  (10);
\end{tikzpicture}\]

For $m = 1$, we get a complete bipartite graphs $K_{\ell, \ell}$. Consider an automorphism $\phi$ of the graph $\G$ that is of order 2 and  satisfies $\phi(\lambda_i) = \lambda_{i+1}$ if $i$ is odd.  Let $H$ denote a subgroup of $\Aut(\G)$ generated by $\phi$ so that $H \cong \mathbb Z_2$.  Then the stabilizers $H_i$ are all trivial and $\dim(V_{\lambda_i}) = \ell \geq 3$. Hence by Corollaries \ref{MainCorollary} and \ref{cor:graph}, the associated  infra-nilmanifold with holonomy group $\mathbb Z_2$ admits an Anosov diffeomorphism.

We can generalize the above family by varying the sizes of coherent classes and keeping the  size of $\lambda_i$ and $\lambda_{i+1}$ the same, for $i$ odd. More precisely, we assume that the size of $\lambda_i$ and $\lambda_{i+1}$ is $\ell_{\frac{i+1}{2}}$  for all odd $i$, where $\ell_{\frac{i+1}{2}} \geq 2$ for  $i \neq 2m-1$ and $\ell_m \geq 3$. Note that we need $\ell_m \geq 3$ as $c(\lambda_{2m-1}) = 2$ (see Corollary \ref{MainCorollary}). Using the same graph automorphism $\phi$ as above, we will get the associated infra-nilmanifolds  admitting Anosov diffeomorphisms with holonomy group $\mathbb Z_2$. We note that the  number of vertices in $\G$ is $\displaystyle \sum_{i=1}^m 2 \ell_i$ and the number edges in $\G$ is $\displaystyle \sum_{i=1}^{m-1} 2 \ell_{i} \ell_{i+1} + \ell_m^2 $. Hence this family gives us examples of infra-nilmanifolds  of dimensions of the form  $\displaystyle \sum_{i=1}^m 2 \ell_i + \sum_{i=1}^{m-1} 2 \ell_{i} \ell_{i+1} + \ell_m^2$ where $m \geq 1$, $\ell_m \geq 3$ and $\ell_i \geq 2$ for $i \neq m$. In particular, for $m \geq 2$, by letting $\ell_m = 3$ and $\ell_i = 2$ for all $ 1 \leq i < m$, the dimension of the corresponding  infra-nilmanifold is $(4m+2) + (8m+5) = 12 m + 7$.  
%\cb{*******Meera: I checked it again. Here are the steps:  $\displaystyle \left( \sum_{i=1}^{m-1} 2 \ell_i + 2 \ell_m \right) + \left( \sum_{i=1}^{m-2} 2 \ell_{i} \ell_{i+1} + 2 \ell_{m-1} \ell_m \right)+ \ell_m^2 = (4 (m-1) + 6) + (8(m-2) + 2(2)(3)) + 9 = 12m -4 + 6 -16 + 12 + 9 = 12m + 7 $. Let me know if this is right. }.  
If $m =1$, the dimension is $6+9= 15$. 

\end{example}

\begin{remark} One can modify the Family I in many different ways. We describe one such a modification.  Let $ m \geq 3,  \ell_1 = \ell_m = 3$ and  $\ell_i =2 $ for $2 \leq i \leq m-1$. The sizes of the coherent classes are the same as in the generalized family as described above, i.e., $\# \lambda_i = \# \lambda_{i+1} = \ell_{\frac{i+1}{2}}$  for all odd $i$.  We consider a graph $\G$ whose quotient graph is 

\[ \begin{tikzpicture}[font=\small,baseline=-4]
\node[vertex,label={below:$\lambda_1$}] (1) at (0,0) {};
\node[vertex,label={below:$\lambda_3$}] (2) at (1, 0) {};

\node[vertex,label={center:$$}] (3) at (1,0) {};
\node[circle,fill,scale=0,label={center:$\cdots$}] (4) at (1.5,0) {};

\node[vertex,label={below:$\lambda_{2m-1}$}] (5) at (2,0) {};
\node[vertex,label={below:$\lambda_{2m}$}] (6) at (3, 0) {};

\node[vertex,label={center:$$}] (8) at (3,0) {};
\node[circle,fill,scale=0,label={center:$\cdots$}] (7) at (3.5,0) {};
%\node[circle,fill,scale=0,label={center:$\cdots$}] (7) at (4,0) {};

\node[vertex,label={below:$\lambda_4$}] (9) at (4,0) {};
\node[vertex,label={below:$\lambda_2$}] (10) at (5, 0) {};

\path[-]
(1) edge [loopup] (1)
 (1) edge (2)
 (2) edge  (3)
 (5) edge (6)
%  (7) edge  (9)
  (9) edge  (10)
  (10) edge [loopup] (10)
  ;
\end{tikzpicture}\]

In this case also by considering the same type of order 2 graph automorphism as in Example \ref{FamilyI}, we get a family of infra-nilmanifolds admitting Anosov diffeomorphisms with holonomy group $\mathbb Z_2$.  The number of vertices of $\G$ is then $4m+4$ and the number of edges is $8m+5+10$. Hence the dimension of the corresponding 2-step nilpotent Lie algebra is $12m + 19$. We observe that these dimensions are already covered by Example \ref{FamilyI}. However, this graph and a graph from the Family I are not isomorphic and hence the associated 2-step nilpotent Lie algebras are non isomorphic \cite{main15-1}. This gives us examples of non isomorphic infra-nilmanifolds admitting Anosov diffeomorphisms with holonomy group $\mathbb Z_2$ and with dimension of the form $12m + 19$ for $m \geq 3$.

 \end{remark}

\begin{example}[Holonomy $\Z_n$ with $n \geq 3$]\label{FamilyII} 
%	We give a family of graphs giving rise to infra-nilmanifolds admitting Anosov diffeomorphisms with $\mathbb Z_n$ holonomy for each $n \geq 2$. We have already given examples with $\mathbb Z_2$ holonomy from  Family I.   

Let $n \geq 3$, $n \neq 4$. We consider a graph $\G$ with $n$ coherent classes $\{\lambda_1, \ldots, \lambda_{n}\}$ all of size 3 and whose quotient graph $\overline{\G}$  is a cycle graph  with $\lambda_i \lambda_{(i+1)\mod n}$ an edge in $\overline{\G}$ for all $i$, $1\leq i \leq n$.
 Let $\phi$ denote the graph automorphism of $\G$ of order $n$ satisfying $\phi(\lambda_i) = \lambda_{(i+1)\mod n}$ for all $i$ and $H$ be a cyclic subgroup of $\Aut(\G)$ generated by $\phi$ so that $H \cong \mathbb Z_{n}$. Then an infra-nilmanifold  associated to $\G$ admits an Anosov diffeomorphism with holonomy group $\mathbb Z_n$. The dimension of the infra-nilmanifold is $ 3n + 9 n = 12 n.$ This can be generalized by choosing the size of each coherent class to be $\ell \geq 3$ to get dimension $\ell n  + \ell^2 n$.

 For holonomy group $\mathbb Z_4$, we consider a graph  with $4$ coherent classes each of size $\ell \geq 3$ and whose quotient graph looks like
 
 \[ \begin{tikzpicture}[font=\small,baseline=-4]
\node[vertex,label={left:$\lambda_1$}] (1) at (0,0) {};
\node[vertex,label={right:$\lambda_2$}] (2) at (1, 0) {};
 
\node[vertex,label={right:$\lambda_3$}] (3) at (1,-1) {};
\node[vertex,label={left:$\lambda_4$}] (4) at (0, -1) {};
\path[-]
(1) edge [loopup] (1)
(2) edge [loopup] (2)
(3) edge [loopdown] (3)
(4) edge [loopdown] (4)

 (1) edge (2)
 (2) edge  (3)
 (3) edge (4)
  (4) edge (1);
\end{tikzpicture}\] 

Here $H$ would be a subgroup of graph automorphism group generated by an order 4 rotation mapping $\lambda_i$ to $\lambda_{(i+1)\text{mod} 4}$. The corresponding infra-nilmanifold is of dimension $4 \ell + 2 \ell (\ell -1) + 4 \ell^2$. The minimum possible such a dimension  is  $60$ when $\ell = 3$.

\end{example}

\begin{remark}
In Family II described as above, we note that we had to deal with the $\mathbb Z_4$ case separately. This is because a  simple cycle  graph on 4 vertices  can not be realized as a quotient graph of a graph. Here we want to note that sufficient conditions for a given graph to be a quotient graph are given in  \cite[ Lemma 4.6]{MPS18}
\end{remark}

\section{Open questions}
\label{sec:openQ}

Although Theorem \ref{maintheorem} seems to give a full answer about the existence of Anosov diffeomorphisms on Lie groups associated to graphs, it only considers one type of uniform lattices of these Lie groups. However, if $N$ is a $1$-connected nilpotent Lie group with a uniform lattice $M \le N$, then the existence of an Anosov diffeomorphism on the nilmanifold $\faktor{N}{M}$ depends on the lattice $M$. For example, if $N_\G$ is the Lie group associated to the graph  $ \G $ given as follows
\[ \begin{tikzpicture}[font=\small,baseline=-4]
\node[vertex,label={left: $\alpha$}] (1) at (0,0) {};
\node[vertex,label={right:$\beta$}] (2) at (1, 0) {};
 
\node[vertex,label={right:$\delta$}] (3) at (1,-1) {};
\node[vertex,label={left:$\gamma$}] (4) at (0, -1) {};
\path[-]

 (1) edge (2)
 
 (3) edge (4)
 ;
\end{tikzpicture}\] 
%of Figure \ref{fig:2heis}, 
%\begin{figure}
%	\begin{tikzpicture}
%\draw (1,1) -- (1,0);
%\draw (0,1) -- (0,0);
%\foreach \Point/\PointLabel in {(0,0)/\alpha, (0,1)/\beta, (1,0)/\gamma, (1,1)/\delta}
%\draw[fill=black] \Point circle (0.05) node[above left] {$\PointLabel$};
%%		\foreach \Point in {(-2,-1), (-1,-1), (1,-1), (2,-1), (0,0)}{
%%			\node at \Point {\textbullet};
%%		}
%\end{tikzpicture}
%	\label{fig:2heis}
%	\caption{Graph corresponding to direct sum of Heisenberg groups}
%\end{figure}
\noindent i.e.~$N_\G$ is the direct sum of two real Heisenberg groups, then there exist uniform lattices $M_1, M_2 \le N_\G$ such that $\faktor{N_\G}{M_1}$ admits an Anosov diffeomorphism but $\faktor{N_\G}{M_2}$ does not.
%\cb{**********Meera: Is connected graph vs disconnected graph an issue? For example, if the graph is connected (or equivalently, $\n$ is indecomposable), then do we have any example of this type? that the graph rational form  is not Anosov but other is? - Jonas: There are examples, but I gave the easiest one. I think that at this point this example suffices, since we don't give a detailed study of this problem?}  
Equivalently, the Lie algebra $\lien_\G^\R$ has two distinct rational forms where one has an Anosov automorphism and the other does not \cite{malf97-3}. Hence when studying Anosov diffeomorphisms, even for nilmanifolds associated to graphs, the choice of rational form is crucial. For free nilpotent Lie groups, there is only one posible uniform lattice up to commensurability, hence this problem did not occur in the work \cite{dv09-1}.

Note that in this paper we only considered infra-nilmanifolds associated to graphs, which is by definition equivalent to considering only the standard rational form $\lien_\G^\Q \subset \lien_\G^\R$ of Lie algebras associated to graphs. It is still an open problem to describe the other rational forms of $\lien_\G^\R$ and characterize the ones which admit an Anosov automorphism.

\begin{QN}
	\label{Q1}
Is there a description of all rational forms $\liem^\Q \subset \lien^\R_\G$ of real Lie algebras associated to graphs? Can we characterize the ones which admit an Anosov automorphism? 
\end{QN}
Even if $\lien_\G^\Q$ is not Anosov, their could be other rational forms which are Anosov, as the example $\G$ of the direct sum of two Heisenberg algebras shows. But if $\lien_\G^\Q$ is Anosov, all low-dimensional examples seem to imply the following conjecture.

\begin{Con}
Let $\lien_\G^\Q$ be a Lie algebra associated to a graph $\G$ admitting an Anosov automorphism. If  $\liem^\Q$ is any rational form of the real Lie algebra $\lien_\G^\R$, then $\liem^\Q$ is Anosov as well.
\end{Con}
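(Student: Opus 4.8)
The plan is to detect the Anosov property of an arbitrary rational form through a holonomy-type representation, in direct analogy with the mechanism behind Theorem~\ref{maintheorem}, and then to feed in the strong dimension bounds that the Anosov-ness of the standard form provides. By Theorem~\ref{charkk} a rational form is Anosov exactly when it admits an integer-like hyperbolic automorphism, and passing to semisimple parts reduces the search to a maximal reductive (Levi) subgroup of $\Aut(\liem^\Q)$. The first step is to describe this Levi. Over $\R$ one has $\liem^\R=\lien_\G^\R$, so the graph-combinatorial data---the coherent components $\lambda_1,\dots,\lambda_k$, the partial order $\oprec$, the dimensions $\dim V_{\lambda_i}$ and the numbers $c(\lambda_i)$---are invariants of the real Lie algebra and are shared by every rational form. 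Consequently the Levi of $\Aut(\liem^\Q)$ is a $\Q$-form of the split group $L=\prod_{i=1}^{k}\GL(V_{\lambda_i})$ of Theorem~\ref{conncomp}, possibly with some components grouped together by the permutation part $P(\Aut(\G))$.

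The second step is to convert this descent data into the setting of Theorem~\ref{maintheorem}. A $\Q$-form of $\prod_i\GL(V_{\lambda_i})$ is governed by a continuous action of $\Gal(\overline{\Q}/\Q)$ which permutes the factors lying in one isomorphism type and twists each by an inner form, so that the factors assemble into Weil restrictions $\mathrm{Res}_{F/\Q}\GL_m(D)$ for number fields $F$ and division algebras $D$. This is formally the same input as a holonomy representation acting on the coherent components, with a finite Galois quotient now playing the role of $H$. I would therefore reproduce the construction in the proof of Theorem~\ref{maintheorem}: choose on each orbit representative $V_{\lambda_i}$ an integer-like, $c(\lambda_i)$-hyperbolic element commuting with the induced Galois representation $\rho_i$, propagate it across the orbit by the descent data, and correct by suitable powers so that the cross-component products attached to the edges of $\overline{\G}$ remain off the unit circle. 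In this way Anosov-ness of $\liem^\Q$ would be equivalent to the splitting criterion of Theorem~\ref{maintheorem} applied to the representations $\rho_i$ determined by the rational form.

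The hypothesis enters at the final step. Since $\lien_\G^\Q$ is Anosov, Corollary~\ref{MainCorollary} forces $\dim V_{\lambda_i}\ge 2$ when $c(\lambda_i)=1$ and $\dim V_{\lambda_i}\ge 3$ when $c(\lambda_i)=2$. When the descent data is everywhere split these are precisely the thresholds in the splitting condition of Theorem~\ref{vert}, so the conjecture would follow at once from Theorem~\ref{maintheorem} with $H$ the relevant Galois group. The main obstacle, and the reason the statement remains only a conjecture, is the anisotropic case: a maximal torus of a form such as $D^\times$ for a definite quaternion algebra, or a norm-one torus of an imaginary quadratic field, contains no integer-like hyperbolic element at all, since there every algebraic unit has all archimedean absolute values equal to $1$ and so violates the splitting condition. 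The crux is therefore to prove that such anisotropic factors cannot arise as the reductive part of $\Aut(\liem^\Q)$ once $\lien_\G^\Q$ is Anosov---that the nilpotent bracket rigidifies the admissible rational forms enough to keep each $\rho_i$ split over $\R$. I expect this rigidity to be the genuinely hard point, since it is a fragment of the open classification of rational forms posed in Question~\ref{Q1} and, for forms that need not be graph-like, neither the block description of Corollary~\ref{defofpi} nor the uniqueness result of \cite{main15-1} applies directly.
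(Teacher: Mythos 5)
This statement is not a theorem of the paper: it appears in Section~\ref{sec:openQ} as an explicit \emph{Conjecture}, supported only by low-dimensional evidence, and the paper offers no proof. Your proposal, by your own admission in its final paragraph, does not close the argument either, so what you have written is a strategy outline rather than a proof. The concrete gap is the one you yourself name: you must show that no rational form $\liem^\Q \subset \lien^\R_\G$ can have an anisotropic (or insufficiently $\R$-split) factor in the reductive part of $\Aut(\liem^\Q)$ whenever the standard form $\lien^\Q_\G$ is Anosov. Nothing in the paper supplies this; it is essentially the content of Question~\ref{Q1}, which the authors state is open. Without it, the dimension bounds $\dim V_{\lambda_i}\geq 2$ or $\geq 3$ coming from Corollary~\ref{MainCorollary} give you nothing, because a definite form of $\GL(V_{\lambda_i})$ of large dimension can still fail to contain an integer-like hyperbolic element (all archimedean absolute values of the relevant units being $1$), exactly as in the two-Heisenberg example the paper uses to show that rational forms genuinely matter.

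There are also unproven intermediate claims in your sketch that should not be waved through. That the maximal reductive subgroup of $\Aut(\liem^\Q)$ is a $\Q$-form of $L=\prod_{i}\GL(V_{\lambda_i})$ \emph{compatible with the coherent decomposition} requires an argument: the coherent subspaces $V_{\lambda_i}$ need not be defined over $\Q$ for a non-standard rational form, and the block description of Corollary~\ref{defofpi} is derived under the hypothesis that $G$ contains the full diagonal torus $D_S$ over the ground field, which is exactly what fails for a non-split form. Likewise, the analogue of Theorem~\ref{maintheorem} with a Galois group playing the role of $H$ is a genuinely new statement, not a formal consequence of the proof given in the paper, since that proof propagates the automorphisms $\varphi_i$ across an orbit using honest elements $\rho(h)\in\Aut(\lien^\Q_\G)$ rather than descent data. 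Your diagnosis of where the difficulty lies is accurate and matches why the authors leave this as a conjecture, but as a proof the proposal is incomplete at its central step.
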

If a full answer to the Question \ref{Q1} would be known, the natural follow-up question would be the generalization of Theorem \ref{maintheorem} to other rational forms of $\lien_\G^\R$.

\begin{QN}
Is there a characterization of the infra-nilmanifolds modeled on Lie groups associated to graphs which admit an Anosov diffeomorphism?
\end{QN}

\noindent Note that there is an important distinction between infra-nilmanifolds modeled on Lie groups associated to graphs and infra-nilmanifolds associated to graphs, since by Definition \ref{def-associated} we assume that the rational Lie algebra of the latter is equal to $\lien^\Q_\G$.

\bibliographystyle{alpha}
\bibliography{ref}

\end{document}